\documentclass[12pt]{amsart}
\usepackage[utf8]{inputenc}
\usepackage[T1]{fontenc}
\usepackage{amsfonts}
\usepackage[leqno]{amsmath}
\usepackage{accents}
\usepackage{graphicx}
\usepackage{amssymb, comment, float}
\usepackage[dvipsnames]{xcolor}
\usepackage[foot]{amsaddr}
\usepackage[shortlabels]{enumitem}
\setlist[itemize]{leftmargin=11mm}
\setlist[enumerate]{leftmargin=11mm}
\usepackage{mathdots}
\usepackage[a4paper, footskip=0.5in,  headheight = 0.5in, top=1.25in, bottom=1.25in,  right=1in,  left=1in]{geometry}
\usepackage{hyperref}
\hypersetup{
colorlinks,
linkcolor=black,
urlcolor=Blue,
citecolor=black,}
\usepackage[capitalise,nameinlink]{cleveref}
\usepackage[center]{caption}
\usepackage{eufrak}
\usepackage{marvosym}     
\usepackage{latexsym}
\usepackage[nodayofweek]{datetime}
\usepackage{tikz}
\usepackage{subfig}
\usepackage{thm-restate}
\usepackage{verbatim}
\usepackage[none]{hyphenat} 
\usepackage{cite}
\usepackage{stmaryrd}
\newcommand{\yl}{\hspace{-0.3mm}\mathrel{\scalebox{1.25}{$\Yleft$}}\hspace{-0.3mm}}

\newtheorem{statement}{Statement}[section]
\newtheorem{theorem}[statement]{Theorem}
\newtheorem{lemma}[statement]{Lemma}
\newtheorem{conjecture}[statement]{Conjecture}
\newtheorem{corollary}[statement]{Corollary}

\DeclareMathOperator{\tw}{{\sf tw}}
\DeclareMathOperator{\pw}{{\sf pw}}
\DeclareMathOperator{\ta}{{\text{\sf{tree}}-\alpha}}
\DeclareMathOperator{\pa}{{\text{\sf{path}}-\alpha}}
  \DeclareMathOperator{\we}{{\text{\sf{w}}}}

\newcommand{\mca}{\mathcal}
\newcommand{\poi}{\mathbb{N}} 

\newcounter{tbox}
\newcommand{\sta}[1]{\medskip\medskip\refstepcounter{tbox}\noindent{\parbox{\textwidth}{(\thetbox) \emph{#1}}}\vspace*{0.3cm}}
\newcommand{\mylongtitle}[1]{%
  \ifodd\value{page}%
    \protect\parbox{0.97\linewidth}{#1}\hfill%
  \else%
    \hfill\protect\parbox{0.97\linewidth}{#1}%
  \fi%
}

\title[Tree-alpha and excluding finitely many graphs]{Tree-alpha and excluding finitely many graphs}

\author{Sepehr Hajebi$^{\dagger}$}

\author{Sophie Spirkl$^{\dagger \ast}$}
\thanks{$^{\dagger}$ Department of Combinatorics and Optimization, University of Waterloo, Waterloo, Ontario, Canada.}
\thanks{$^{\ast}$ We acknowledge the support of the Natural Sciences and Engineering Research Council of Canada (NSERC), [funding reference number RGPIN-2020-03912].
Cette recherche a \'et\'e financ\'ee par le Conseil de recherches en sciences naturelles et en g\'enie du Canada (CRSNG), [num\'ero de r\'ef\'erence RGPIN-2020-03912]. This project was funded in part by the Government of Ontario. This research was conducted while Spirkl was an Alfred P. Sloan Fellow.}

\date{\today}

\begin{document}

 \maketitle


\begin{abstract}
We prove that a hereditary graph class $\mca{G}$ defined by finitely many excluded induced subgraphs has bounded tree-$\alpha$ if and only if it is ``$(\mathrm{tw},\omega)$-bounded'' (that is, for all $t\in \mathbb N$, the class of all $K_t$-free graphs in $\mca{G}$ has bounded treewidth). Equivalently, $\mca{G}$ has bounded tree-$\alpha$ if and only if it excludes a complete bipartite graph, a forest whose components each have at most three leaves, and the line graph of such a forest.

This resolves two conjectures of Dallard, Krnc, Kwon, Milani\v{c}, Munaro, \v{S}torgel, and Wiederrecht: the above, and a weaker one that for all $a,b\in \mathbb N$, every hereditary class that excludes $K_{a,a}$ and the $b$-vertex path has bounded tree-$\alpha$. The latter was already open even for $(a,b)\in \{(2,7),(3,5)\}$, and only recently proved for $(a,b)=(2,6)$.
\end{abstract}

\section{Introduction}
We write $\poi$ for the set of positive integers, $V(G)$ and $E(G)$ for the vertex set and edge set of a graph $G$, and $\alpha(G)$ and $\omega(G)$ for the cardinalities of a largest \textit{stable set} (a set of pairwise nonadjacent vertices) and a largest \textit{clique} (a set of pairwise adjacent vertices) in $G$. Graphs in this paper are finite, with no ``loops'' or ``parallel edges.'' A \textit{class} is a set of graphs taken up to isomorphism. See \cite{diestel} for other standard graph-theoretic terminology such as tree decompositions and the treewidth and pathwidth of a graph $G$, denoted $\tw(G)$ and $\pw(G)$.

A class $\mca{G}$ has \textit{bounded treewidth} if there exists $c\in \mathbb N$ such that $\tw(G)\leq c$ for all $G\in \mca{G}$; \textit{bounded pathwidth} is defined analogously. Over the past decade, considerable work has focused on characterizing these properties in hereditary (induced-subgraph-closed) classes. One motivation is the very pretty answer, due to Robertson and Seymour \cite{GMI,GMV}, to the same question for minor-closed classes, where bounded treewidth and pathwidth are equivalent to excluding a planar graph and a forest, respectively. Among many others, we studied the induced subgraph question, largely in joint work with Chudnovsky (and often others), including \cite{tw7, tw8, tw9, tw11, tw13, tw12, tw19, tw17, tw18, tw16}, and settled the pathwidth case in \cite{tw18}. The treewidth case continues to evade us.

A necessary condition for bounded treewidth is to exclude large complete graphs, so one may refine the notion accordingly: A class $\mca{G}$ is \textit{$(\tw,\omega)$-bounded} if there is a function $f:\poi\to \poi$ such that $\tw(G)\leq f(\omega(G))$ for every $G\in \mca{G}$. For instance, chordal graphs form a $(\tw,\omega)$-bounded class since they (are exactly the graphs that) admit tree decompositions whose bags induce complete subgraphs \cite{diestel}. 

More generally, the \textit{tree independence} (or \textit{stability}) \textit{number} of a graph $G$ \cite{DMS2,yolov}, denoted $\ta(G)$, is the minimum $c\in \mathbb N$ for which $G$ admits a tree decomposition whose bags induce subgraphs with no stable set on $c+1$ vertices, and a class $\mca{G}$ has \textit{bounded $\ta$} if there exists $c\in \mathbb N$ such that $\ta(G)\leq c$ for all $G\in \mca{G}$; the class of chordal graphs has bounded $\ta$ since $\ta(G)\leq 1$ if (and only if) $G$ is chordal. By Ramsey's theorem \cite{multiramsey}, every class of bounded $\ta$ is $(\tw,\omega)$-bounded (indeed with a polynomial bounding function). Dallard, Milani\v{c}, and \v{S}torgel \cite{DMS} made a daring conjecture that for hereditary classes the converse also holds:

\begin{conjecture}[Dallard, Milani\v{c}, \v{S}torgel; 8.5 in \cite{DMS}]\label{conj:DMS}
    A hereditary class $\mca{G}$ has bounded $\ta$ if (and only if) $\mca{G}$ is $(\tw,\omega)$-bounded.
\end{conjecture}

Chudnovsky and Trotignon \cite{CT} refuted this in a strong sense: For every $g:\poi\setminus \{1\}\to \poi$, there is a hereditary $(\tw,\omega)$-bounded class where all bounding functions are pointwise larger than $g$. (For pathwidth, the picture is more favorable: one of us recently proved \cite{polypw} that every hereditary ``$(\pw,\omega)$-bounded'' class admits a polynomial ``$(\pw,\omega)$-bounding'' function, although the ``$\pa$'' version of \Cref{conj:DMS} is also false \cite{awesome}.)

\Cref{conj:DMS} may still be true for certain types of hereditary classes. A fairly general conjecture in this direction was posed by Dallard, Krnc, Kwon, Milani\v{c}, Munaro, \v{S}torgel, and Wiederrecht \cite{DKKMMSW}, asserting that \Cref{conj:DMS} holds for hereditary classes defined by finitely many excluded induced subgraphs. For graphs $G$ and $H$, we say that $G$ is \textit{$H$-free} if no induced subgraph of $G$ is isomorphic to $H$. For a class $\mca{F}$ of graphs, we say that $G$ is \textit{$\mca{F}$-free} if $G$ is $F$-free for every $F\in \mca{F}$. It follows that a class $\mca{G}$ is hereditary if and only if it is the class of all $\mca{F}$-free graphs for some class $\mca{F}$.

\begin{conjecture}[Dallard, Krnc, Kwon, Milani\v{c}, Munaro, \v{S}torgel, and Wiederrecht; 1.2 in \cite{DKKMMSW}]\label{conj:mainclass}
Let $\mca{F}$ be a finite class of graphs. Then the class of all $\mca{F}$-free graphs has bounded $\ta$ if (and only if) it is $(\tw,\omega)$-bounded.
\end{conjecture}

\begin{figure}[t!]
    \centering
    \includegraphics[scale=0.4]{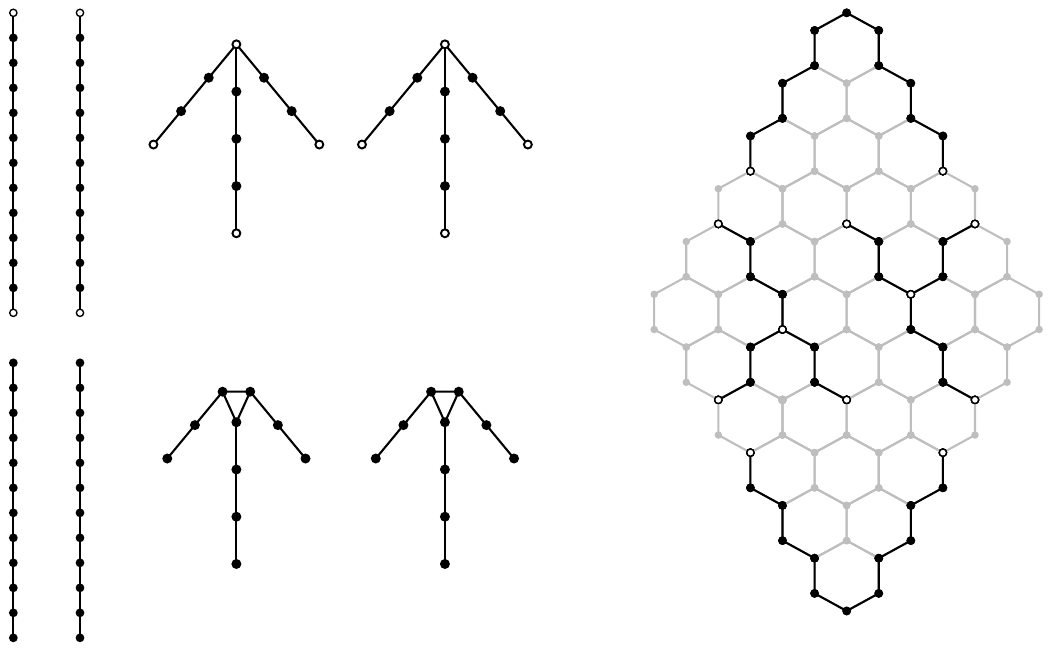}
    \caption{\raggedright Left: A subdivided multiclaw $F$ with four components (top) and its line graph (bottom). Right: The $7$-by-$7$ wall contains an induced copy of $F$, and so (the line graph of) every subdivision of the same wall contains an induced copy of (the line graph of) $F$.}
    \label{fig:wall}
\end{figure}

In this paper, we prove \Cref{conj:mainclass} -- more precisely, an equivalent formulation which is more explicit. A \textit{multiclaw} is a graph each component of which is isomorphic to an induced subgraph of $K_{1,3}$ (see \Cref{fig:wall}). It is a well-known observation \cite{lozin} that for a finite class $\mca{F}$, if the class of all $\mca{F}$-free graphs is $(\tw,\omega)$-bounded, then $\mca{F}$ contains a complete bipartite graph, a subdivided multiclaw, and the line graph of a subdivided multiclaw. (In fact, the converse is also true \cite{lozin}.) It follows that \Cref{conj:mainclass} is equivalent to:

\begin{conjecture}[Dallard, Krnc, Kwon, Milani\v{c}, Munaro, \v{S}torgel, and Wiederrecht; 1.2 in \cite{DKKMMSW}]\label{conj:mainexcluded}
Let $\mca{F}$ be a finite class of graphs. Then the class of all $\mca{F}$-free graphs has bounded $\ta$ if (and only if) there exist $F_1,F_2,F_3\in \mca{F}$ where $F_1$ is complete bipartite, $F_2$ is a subdivided multiclaw, and $F_3$ is the line graph of a subdivided multiclaw.
\end{conjecture}

We prove a stronger statement: instead of the line graph of a subdivided multiclaw, we exclude the line graphs of all subdivisions of some ``wall.'' A \textit{wall} is a hexagonal grid, and it is readily observed that for every subdivided multiclaw $F$, there is a wall $W$ such that (the line graph of) every subdivision of $W$ has an induced subgraph isomorphic to (the line graph of) $F$; see \Cref{fig:wall}.
\medskip

Our main result is the following strengthening\footnote{One may wonder whether the subdivided multiclaw can also be replaced by subdivisions of some wall. Several constructions show that this would fail. See for instance \cite{deathstar, layeredwheels}.} of \Cref{conj:mainexcluded}:

\begin{restatable}{theorem}{main}\label{thm:main}
     For all $a\in \mathbb N$, every subdivided multiclaw $F$, and every wall $W$, there is a constant $c_{\ref{thm:main}}=c_{\ref{thm:main}}(a,F,W)\in \mathbb N$ such that if $G$ is a $K_{a,a}$-free $F$-free graph with no induced subgraph isomorphic to the line graph of any subdivision of $W$, then $\ta(G)\leq c_{\ref{thm:main}}$.
\end{restatable}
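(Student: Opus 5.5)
We will argue by contradiction, using the standard route from large tree-$\alpha$ to a highly connected set of vertices that is spread out in the stability sense. Concretely, we plan to invoke a balanced-separator characterization of $\ta$: if $\ta(G)$ is huge, then there is a set $S\subseteq V(G)$ that cannot be balanced-separated by the neighbourhood of a bounded number of vertices. Equivalently, for every set $X$ of at most $k$ vertices, some component of $G\setminus N[X]$ contains more than half of $S$. Since the sets $N[X]$ play the role that bounded-size separators play for treewidth, this gives a ``stable-set tangle.''

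From this tangle we will build large connected structures. Because $G$ is $K_{a,a}$-free, Ramsey arguments together with the Kővári–Sós–Turán-type bound show that any large clique-free, bounded-degree-like substructure is available. The first step is to use the stable-set tangle and the $K_{a,a}$-freeness to find, inside $G$, a subgraph of huge treewidth that is $K_t$-free after a bounded-size modification. Along the way we may assume $\omega(G)$ is bounded by a function of $a$ and $F$, since a large clique together with the tangle should produce either $K_{a,a}$ or the line graph of a subdivided wall. This is where the third excluded family enters: cliques in the tangle look like line graphs of branching points. We then want to apply the known $(\tw,\omega)$-boundedness of $\{K_{a,a},F,L(\text{subdivided }W)\}$-free graphs, obtained from the Lozin–Razgon theorem, to a suitable induced subgraph.

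The key technical step is to find many ``independent'' connected pieces hitting the tangle. For each piece we contract a neighbourhood, in the sense of taking induced minors that preserve $F$-freeness and $L(W)$-freeness, so as to obtain a graph of bounded clique number but unbounded treewidth. That graph must then contain $F$ or the line graph of a subdivided wall, or $K_{a,a}$, and we lift any of these back to $G$.

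The main obstacle is this lifting. Induced minors do not preserve $F$-freeness in general, so the contraction must be replaced by a careful choice of induced paths ($F$ is a subdivided multiclaw, i.e. made of long paths). We plan to use the $F$-freeness to bound the length of induced paths between tangle pieces, and to use $K_{a,a}$-freeness to control degrees into them. This mirrors the Chudnovsky–Hajebi–Spirkl techniques for ``layered wheels''-free classes. We expect that the precise construction of a bounded-$\alpha$ bag decomposition from the absence of these structures will be the longest part of the argument.
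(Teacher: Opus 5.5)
\textbf{There is a genuine gap, and it comes at your very first step, which is exactly where the difficulty of the problem lies.}

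The balanced-separator characterization of $\ta$ (\Cref{lem:sep-vs-ta}) is about separators of small \emph{stability number}. Large $\ta(G)$ gives a weight function with no balanced separator $Z$ such that $\alpha(G[Z])$ is small. It does not give a set that resists balanced separation by every $N[X]$ with $|X|\le k$, because $N[X]$ can have huge stability number even when $|X|=1$. For example, a large grid plus a universal vertex $z$ is $K_{3,3}$-free and has huge $\ta$, yet $N[z]=V(G)$.

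In fact, your ``stable-set tangle'' never exists inside the class of the theorem. By the theorem of Chudnovsky, Codsi, Lokshtanov, Milani\v{c} and Sivashankar (\Cref{lem:sep-domin}, extended to multiclaws in \Cref{cor:sep-domin}), every $F$-free graph with no line graph of a subdivision of $W$ is $b$-separator-dominated for a constant $b=b(F,W)$. So the real situation is the reverse of the one you set up. Balanced separators of the form $N[X]$ with $|X|\le b$ always exist, but their stability number is unbounded. The whole task is to use $K_{a,a}$-freeness to turn them into bounded-$\alpha$ separators, and your plan never addresses this.

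The later steps do not work either:
\begin{itemize}
\item You cannot assume $\omega(G)$ is bounded. Complete graphs have $\ta=1$ and satisfy all three exclusions when $a\ge 2$ and $F$ is not complete.
\item Passing from large $\ta$ to an induced subgraph of bounded clique number and large treewidth is, as a general principle, exactly \Cref{conj:DMS}, which Chudnovsky and Trotignon refuted. So it needs class-specific input, which you do not supply.
\item $F$-freeness does not bound induced path lengths once $F$ has a vertex of degree three.
\item The induced-minor lifting is flagged as a problem but left unresolved.
\end{itemize}

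\textbf{The idea you are missing is loose $\alpha$-separability.} The paper's argument has two halves.

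First, \Cref{lem:ta-vs-sep-domin}. Suppose every $b$-system either has a hitting set $X$ with $\alpha(G[X])\le c$, or has two members $S_1,S_2$ such that all $(S_1,S_2)$-paths are hit by some $Y$ with $\alpha(G[Y])\le d$. Then separator-domination yields balanced separators with $\alpha\le 2c+d$. Apply the hypothesis to the system of all sets $S$ with $|S|\le b$ for which $N[S]\cup X$ is balanced for some $X$ with $\alpha(G[X])\le c$.
\begin{itemize}
\item The first outcome is impossible: domination applied in $G\setminus X$ produces such a set disjoint from $X$.
\item In the second outcome, let $X_1,X_2$ be the witnesses for $S_1,S_2$. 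Then $X_1\cup X_2\cup Y$ is balanced. A heavy component of its complement must meet both $N[S_1]$ and $N[S_2]$, so it contains the interior of an $(S_1,S_2)$-path avoiding $Y$, a contradiction.
\end{itemize}

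Second, \Cref{thm:mainbnddsep}: $K_{a,a}$-free $F$-free graphs are loosely $\alpha$-separable. The key new tool is \Cref{thm:manygraphs}. If $K_{a,a}$-free graphs $G_1,\ldots,G_b$ on a common vertex set have a union of bounded stability number, then the vertex set can be covered by sets $V_i$ with $\alpha(G_i[V_i])$ bounded. This yields \Cref{lem:hit-vs-anti}: either a small-$\alpha$ hitting set or a large anticomplete subsystem. Ramsey-type disentangling then finishes the argument. Many pairwise anticomplete short connecting paths give an induced short subdivision of $K_{a,a}$. Many long ones give a large induced subdivided multistar containing $F$.

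No treewidth, clique-number or induced-minor arguments enter anywhere.
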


There has been no shortage of partial results on Conjectures~\ref{conj:mainclass} and \ref{conj:mainexcluded}. To mention a few, it had been proved that \Cref{conj:mainexcluded} holds:

\begin{itemize}[itemsep=1mm,leftmargin=8mm]
\item with ``bounded'' weakened to ``polylogarithmic in the number of vertices'' \cite{ti5};
\item when $F_1$ is a star \cite{DKKMMSW};
\item when each component of both $F_2$ and $F_3$ has at most two vertices \cite{indmatch};
\item when $F_2=F_3$ consists of a $3$-vertex path and an isolated vertex \cite{tinypaths};
\item when $F_2=F_3$ is the $4$-vertex path \cite{DKKMMSW};
\item when $F_1=K_{2,2}$ and $F_2=F_3$ is a $5$-vertex path \cite{DKKMMSW};
\item when $F_1=K_{2,d}$ for some $d\in \mathbb N$, and $F_2=F_3$ is the $6$-vertex path \cite{k2dp6}.
\end{itemize}

Note that in the last five results, $F_2=F_3$ is (an induced subgraph of) a path. This (very) special case of \Cref{conj:mainexcluded} was singled out as a separate conjecture in \cite{DKKMMSW}:

\begin{conjecture}[Dallard, Krnc, Kwon, Milani\v{c}, Munaro, \v{S}torgel, and Wiederrecht; 1.3 in \cite{DKKMMSW}]\label{conj:path}
For all $a,b\in \mathbb N$, the class of all $K_{a,a}$-free graphs with no induced subgraph isomorphic to the $b$-vertex path has bounded $\ta$.
\end{conjecture}

Somewhat vexatiously, this remained open already for $(a,b)\in \{(2,7),(3,5)\}$, and was proved only recently \cite{k2dp6} for $(a,b)=(2,6)$. Our result settles \Cref{conj:path}.
\medskip

We conclude by pointing out that \Cref{thm:main} has strong algorithmic consequences: it implies that \textsc{Maximum Weight Independent Set}, and several other problems that are {\sf NP}-hard in general, admit polynomial-time algorithms when restricted to hereditary classes that exclude a complete bipartite graph, a subdivided multiclaw, and the line graphs of all subdivisions of a wall. See \cite{ti2,ti5} for details.
\medskip

The rest of the paper is organized as follows. In \Cref{sec:notation}, we introduce notation and terminology used throughout. In \Cref{sec:loose}, we break the proof of \Cref{thm:main} into its two main steps, Theorems~\ref{thm:mainbnddsep} and \ref{thm:ta-vs-sep}. In Sections~\ref{sec:disentangle} and \ref{sec:many}, we prove the technical results needed for the proof of \Cref{thm:mainbnddsep}, which is completed in \Cref{sec:short}. We then prove \Cref{thm:ta-vs-sep} in \Cref{sec:separators}.

\section{Preliminaries}\label{sec:notation}
Here we include the (mostly customized) notations and definitions that we will use in the rest of the paper:
\begin{itemize}[itemsep=1mm,leftmargin=8mm]

\item For integers $k,k'$, we denote by $\{k,\ldots,k'\}$ the set of all integers at least $k$ and at most $k'$. For a set $I$ of integers, we define 
$I\yl I=\{(i,j)\in I\times I: i<j\}$. Note the natural bijection between $I\yl I$ and the set of all $2$-subsets of $I$.

\item For a set $S$, we denote the power set of $S$ by $2^S$, and the set of all $k$-subsets of $S$, where $k\in \poi\cup \{0\}$, by $\binom{S}{k}$. If $S$ is a set of sets, a \textit{hitting set for $S$} is a set $X$ such that $X\cap Y\neq\varnothing$ for every $Y\in S$. In particular, every set is a hitting set for $S=\varnothing$, whereas $S=\{\varnothing\}$ admits no hitting set.

\item For graphs $G_1,\ldots, G_k$, where $k\in \mathbb N$, we write $G_1\cup \ldots\cup G_k$ for the \textit{graph union} of $G_1,\ldots, G_k$; that is, the graph whose vertex set is the union of the vertex sets of $G_1,\ldots, G_k$ and whose edge set is the union of the edge sets of $G_1,\ldots, G_k$.

\item Let $P$ be a path with $V(P)=\{x_1,\dots,x_k\}$ and $E(P)=\{x_ix_{i+1}:i\in \{1,\ldots,k-1\}\}$, where $k\in \mathbb N$. We call $x_1$ and $x_k$ the \textit{ends of $P$}. The \textit{interior of $P$}, denoted $P^*$, is the set $V(P)\setminus \{x_1,x_k\}$ of vertices. The \textit{length} of $P$ is $|E(P)|=k-1$.

\item Given a graph $H$, a \textit{subdivision} of $H$ is a graph $H'$ obtained from $H$ by replacing each edge with a path of nonzero length between the same ends such that for distinct edges of $H$, the corresponding paths are internally disjoint. For $r \in \poi\cup \{0\}$, we say that $H'$ is a \textit{$(\leq r)$-subdivision} of $H$ if each edge of $H$ is replaced by a path of length at most $r+1$, and an \textit{$r$-subdivision} if each edge is replaced by a path of length exactly $r+1$. We also say that $H'$ is a \textit{proper subdivision} of $H$ if each edge is replaced by a path of length at least two, and a \textit{pure subdivision} if either $H'=H$ or $H'$ is a proper subdivision of $H$.
\end{itemize}

Let $G$ be a graph:
\begin{itemize}[itemsep=1mm,leftmargin=8mm]
    \item A \emph{path in $G$} is an induced subgraph of $G$ that is a path. Given a set $\mca{P}$ of paths in $G$, a \textit{hitting set for $\mca{P}$} is a hitting set for $\{V(P):P\in \mca{P}\}$. For $X,Y\subseteq V(G)$, an \textit{$(X,Y)$-path in $G$} is a path $P$ in $G$ such that either $P$ has length zero and $V(P)\subseteq X\cap Y$, or $P$ has positive length, one end lies in $X\setminus Y$, the other in $Y\setminus X$, and $V(P^*)\cap (X\cup Y)=\varnothing$. We denote the set of all $(X,Y)$-paths in $G$ by $\mca{P}_G(X,Y)$, and the set of all $(X,Y)$-paths of length at most $r$ in $G$, for $r\geq 0$, by $\mca{P}^r_G(X,Y)$. When $X=\{x\}$, we write $\mca{P}_G(x,Y)$ and $\mca{P}^r_G(x,Y)$ instead of $\mca{P}_G(\{x\},Y)$ and $\mca{P}^r_G(\{x\},Y)$ (and similarly if $|Y|=1$).

\item For $X\subseteq V(G)$, we denote by $G[X]$ the subgraph of $G$ induced by $X$, and by $G\setminus X$ the subgraph of $G$ induced by $V(G)\setminus X$. We denote by $N_G(X)$ the set of all vertices in $V(G)\setminus X$ with at least one neighbor in $X$. If $X=\{x\}$, we write $G\setminus x$ instead of $G\setminus \{x\}$, and $N_G(x)$ instead of $N_G(\{x\})$. We also define $N_G[x]=N_G(x)\cup \{x\}$. 

\item For $X,Y\subseteq V(G)$, we say that \textit{$X$ and $Y$ are anticomplete in $G$} if $X\cap Y=\varnothing$ and there is no edge in $G$ with an end in $X$ and an end in $Y$; in this case, we also say that \textit{$G[X]$ and $G[Y]$ are anticomplete in $G$}. If $X=\{x\}$, we say that \textit{$x$ is anticomplete to $Y$ in $G$} to mean that $\{x\}$ and $Y$ are anticomplete in $G$. We say that \textit{$X$ is complete to $Y$ in $G$} if $X\cap Y=\varnothing$ and $xy\in E(G)$ for all $(x,y)\in X\times Y$. If $X=\{x\}$, we say that \textit{$x$ is complete to $Y$ in $G$} to mean that $\{x\}$ is complete to $Y$ in $G$.

\item For $b\in \mathbb N$, a \textit{$b$-system} in $G$ is a set of non-empty subsets of $V(G)$, each of cardinality at most $b$ (thus, $S=\varnothing$ is allowed, whereas $S=\{\varnothing\}$ is not). We say that a $b$-system $\mca{S}$ in $G$ is \textit{anticomplete} if every two distinct sets $S_1,S_2\in \mca{S}$ are anticomplete in $G$.
\end{itemize}

\section{Loose $\alpha$-separability}\label{sec:loose}

A graph $G$ is \textit{$d$-separable}, for $d\in \poi$, if for all $x,y\in V(G)$, there is a hitting set $X\subseteq V(G)\setminus \{x,y\}$ for $\mca{P}_G(x,y)$ with $|X|\leq d$, unless $x=y$ or $xy\in E(G)$ (in which case no such hitting set exists). This notion, introduced in \cite{milanicsep}, has proved useful for bounding treewidth in hereditary classes (see, for instance, \cite{polypw, tw17, tw18}). There is also an ``$\alpha$-version'' of separability, used extensively (though sometimes implicitly) in the literature on $\ta$ (see, for instance, \cite{ti2, ti5, ti6, alphabrb, k2dp6, logDMS}): We say that a graph $G$ is \textit{$d$-$\alpha$-separable} if for all distinct nonadjacent $x,y\in V(G)$, there is a hitting set $X\subseteq V(G)\setminus \{x,y\}$ for $\mca{P}_G(x,y)$ with $\alpha(G[X])\leq d$. 

The key to our proof of \Cref{thm:main}, however, is a ``loose'' variant of $\alpha$-separability: For $b,c,d\in \mathbb N$, a graph $G$ is \textit{$(b,c)$-loosely $d$-$\alpha$-separable} if for every $b$-system $\mca{S}$ in $G$, either
\begin{itemize}[itemsep=1mm, leftmargin=8mm]
    \item there is a hitting set $X\subseteq V(G)$ for $\mca{S}$ with $\alpha(G[X])\leq c$; or
    \item there exist $S_1,S_2\in \mca{S}$ such that $\mca{P}_G(S_1,S_2)$ has a hitting set $Y\subseteq V(G)\setminus (S_1\cup S_2)$ with $\alpha(G[Y])\leq d$; in particular, $S_1$ and $S_2$ are anticomplete in $G$.
\end{itemize}
Roughly, the larger $b,c\in \mathbb N$, the ``coarser'' this view is relative to the usual $d$-$\alpha$-separability in $G$. Specifically, $G$ is $d$-$\alpha$-separable if and only if it is $(1,1)$-loosely $d$-$\alpha$-separable.
\medskip

The two main steps in the proof of \Cref{thm:main} are the following:
\begin{theorem}\label{thm:mainbnddsep}
    For all $a,b\in \mathbb N$ and every subdivided multiclaw $F$, there are constants $c_{\ref{thm:mainbnddsep}}=c_{\ref{thm:mainbnddsep}}(a,b,F)\in \mathbb N$ and $d_{\ref{thm:mainbnddsep}}=d_{\ref{thm:mainbnddsep}}(a,b,F)\in \mathbb N$ such that every $K_{a,a}$-free $F$-free graph is $(b,c_{\ref{thm:mainbnddsep}})$-loosely $d_{\ref{thm:mainbnddsep}}$-$\alpha$-separable.
\end{theorem}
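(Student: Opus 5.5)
We argue by contradiction, via a Ramsey-type cleaning argument. Fix $a,b,F$, and suppose $G$ is $K_{a,a}$-free and $F$-free, and $\mca{S}$ is a $b$-system in $G$ for which both outcomes fail. The constants $c\gg d$ are chosen at the end.

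\emph{Step 1: extract a large anticomplete subsystem.} Since no hitting set $X$ for $\mca{S}$ has $\alpha(G[X])\leq c$, the following greedy procedure runs for many rounds. Repeatedly choose $S\in\mca{S}$, add $S\cup N_G(S)$ to a growing set $X$, and discard every member of $\mca{S}$ meeting $X$.

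To control $\alpha(G[X])$, we need stable sets inside neighbourhoods to be small. This is false in general, so we use a different bookkeeping. We pass to an anticomplete $b$-system $\mca{S}'\subseteq\mca{S}$ of size $m$, which is large as a function of $c$. The members of $\mca{S}$ pairwise failing to be anticomplete form a graph on $\mca{S}$. Applying Ramsey together with $K_{a,a}$-freeness (via the standard "Kővári–Sós–Turán for induced bipartite" lemmas), either we obtain many pairwise anticomplete members, or many members share neighbourhoods densely. The second alternative produces a hitting set of bounded $\alpha$: a bounded number of sets, together with a bounded-$\alpha$ common-neighbourhood structure, would hit everything.

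\emph{Step 2: connect the anticomplete members.} Now $\mca{S}'=\{S_1,\dots,S_m\}$ is anticomplete. For each pair $i<j$, indexed by $\{1,\ldots,m\}\yl\{1,\ldots,m\}$, no hitting set of $\mca{P}_G(S_i,S_j)$ avoiding $S_i\cup S_j$ has $\alpha\leq d$. By a Menger-type argument for $\alpha$ (repeatedly taking a path, deleting its closed neighbourhood, and iterating), we obtain many $(S_i,S_j)$-paths that are pairwise "far apart". We then apply Ramsey on $\{1,\ldots,m\}\yl\{1,\ldots,m\}$, together with a disentangling lemma for how paths for different pairs interact, in the spirit of the forthcoming Sections~\ref{sec:disentangle} and \ref{sec:many}. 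This yields a structure where many $S_i$ are linked by paths that are internally anticomplete, except where they meet at the ends. $K_{a,a}$-freeness is used to prevent the paths from being attached by dense bipartite adjacency.

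\emph{Step 3: build $F$.} From such a "constellation", either the paths are long, or they are short. Long paths, each hanging off a bounded-size set $S_i$ that sends three paths, give subdivided claws. Many disjoint anticomplete copies of these yield any subdivided multiclaw, and hence $F$, a contradiction. Short paths put a bounded-length structure on each $S_i$; then bounded $b$ and bounded path length let a bounded-$\alpha$ set hit all the paths, via neighbourhoods of boundedly many vertices with $K_{a,a}$-freeness bounding $\alpha$ of common neighbourhoods. This contradicts the failure of the second outcome.

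\emph{Main obstacle.} The hardest step is Step 2: turning "no bounded-$\alpha$ separator" into many pairwise anticomplete paths, and then disentangling the paths across different pairs so that an induced subdivided multiclaw appears. I would first try an induction on $b$, shrinking members of $\mca{S}$ to single vertices by pigeonholing which vertex of $S_i$ each path leaves from, before performing the Ramsey cleaning.
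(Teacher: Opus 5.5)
\textbf{Verdict: genuine gap.} Your outline has roughly the right shape: extract an anticomplete subsystem, pigeonhole the path ends, disentangle across pairs, and build $F$ from subdivided stars. However, the two steps that carry the weight are asserted without an argument that works.

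\textbf{Step~1 is missing its key ingredient.} Ramsey on the conflict graph of $\mathcal{S}$ gives either a large anticomplete subfamily or a large pairwise-conflicting one. The second alternative says nothing about hitting \emph{all} of $\mathcal{S}$. Also, $K_{a,a}$-freeness bounds $\alpha$ only of the common neighbourhood of $a$ pairwise nonadjacent vertices, not of $N(v)$ or $N(S)$, so neighbourhoods of boundedly many members cannot serve as the hitting set. The real task is to choose one vertex from each member so that the chosen set has bounded $\alpha$. To see what this requires, write the $m$-element members as $S_j=\{x_{1,j},\ldots,x_{m,j}\}$. Let $G_i$ be the graph on the indices $j$ in which $j,j'$ are adjacent when $x_{i,j},x_{i,j'}$ are equal or adjacent; each $G_i$ is $K_{a+1,a+1}$-free. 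If $\alpha(G_1\cup\cdots\cup G_m)$ is large, a common stable set plus Ramsey on cross-coordinate conflicts gives the anticomplete subsystem. If it is bounded, you need a cover $V_1\cup\cdots\cup V_m$ of the indices with every $\alpha(G_i[V_i])$ bounded. That is \Cref{thm:manygraphs}, proved via \Cref{lem:twographs} by an induction on $q$ with a chain argument, and the paper calls it the heart of the proof. Nothing in your sketch replaces it, so you do not have \Cref{lem:hit-vs-anti}.

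\textbf{Step~2 and the short-path half of Step~3 fail for the reason you yourself flagged in Step~1.} Greedily deleting $N[P^*]$ does not keep $\alpha$ bounded: neither $\alpha(G[P^*])$ for long paths nor $\alpha(G[N(P^*)])$ for short ones is controlled. Likewise, short paths cannot be hit ``via neighbourhoods of boundedly many vertices.'' The paper never makes whole paths anticomplete. Instead it reapplies \Cref{lem:hit-vs-anti} to $r$-systems of path pieces, with $r=|V(F)|$, in two places:
\begin{enumerate}[(i)]
\item \emph{Short paths.} Take the interiors of $(S_i,S_j)$-paths of length at most $r$. If many are pairwise anticomplete across many pairs, then pigeonholing ends (\Cref{lem:endsramsey}) and disentangling (\Cref{lem:ramseysmallsystems}) produce an induced subdivision of a large complete bipartite graph whose edge-paths have length between $2$ and $r$. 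Such a graph contains $F$, so this is impossible (\Cref{lem:hit-vs-far}). Hence short paths within a large subsystem are hit by bounded-$\alpha$ sets.
\item \emph{Long paths.} Since the full path family still has no separator with $\alpha\le d$, the length-$r$ initial segments of the remaining long paths cannot be hit cheaply. This yields many segments leaving a common vertex $x_i$ whose remainders are pairwise anticomplete.
\end{enumerate}
These segments are exactly the legs of the $(r-1)$-subdivided $K_{1,r}$ components that give $F$. So your long-path construction is right in spirit, but only anticomplete initial segments, not anticomplete whole paths, are attainable. In particular, Step~2 is not the main obstacle: once the Step~1 lemma is available for arbitrary bounded-size systems, it is routine.
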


\begin{restatable}{theorem}{tavssep}\label{thm:ta-vs-sep}
For every subdivided multiclaw $F$ and every wall $W$, there is a constant $b_{\ref{thm:ta-vs-sep}}=b_{\ref{thm:ta-vs-sep}}(F,W)\in \mathbb N$ with the following property. Let $G$ be an $F$-free graph with no induced subgraph isomorphic to the line graph of any subdivision of $W$, and let $c,d\in \mathbb N$ such that $G$ is $(b_{\ref{thm:ta-vs-sep}}, c)$-loosely $d$-$\alpha$-separable. Then
$\ta(G)\leq 10c+5d$.
\end{restatable}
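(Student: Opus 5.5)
We will prove \Cref{thm:ta-vs-sep} by the standard balanced-separator route: the bound $10c+5d$ has the shape one gets from the well-known lemma that if every ``weight function'' (or every set $Z$ with $\alpha(G[Z])$ bounded) admits a balanced separator $X$ with $\alpha(G[X])\leq k$, then $\ta(G)\leq O(k)$. Concretely, we will argue by contradiction. Suppose $\ta(G)>10c+5d$. By the $\alpha$-analogue of the Robertson--Seymour ``well-linked set'' argument, this yields a set $Z\subseteq V(G)$ with $\alpha(G[Z])$ about $3(2c+d)$ that is \emph{$\alpha$-well-linked}: for every $X\subseteq V(G)$ with $\alpha(G[X])\leq 2c+d$, some component of $G\setminus X$ contains more than half (in the stable-set sense) of a maximum stable set of $Z$. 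The constants $10$ and $5$ come from running the usual recursive decomposition with separators of independence number at most $2c+d$, adding $Z$ to each bag; we will take this step as routine.

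The heart of the argument is to show that well-linkedness, together with $(b,c)$-loose $d$-$\alpha$-separability for a suitable $b=b(F,W)$, forces either an induced $F$ or an induced line graph of a subdivision of $W$. First, fix a large stable subset $I\subseteq Z$. Using loose separability with the $1$-system $\{\{z\}:z\in I\}$, no hitting set of small independence number exists (since $I$ is stable and large), so some two vertices of $I$ are separable by a set $Y$ of independence number at most $d$. Iterating this together with well-linkedness (which forbids small separators splitting $Z$ in a balanced way), we plan to find many pairwise disjoint, pairwise anticomplete ``connected pieces'' among which no pair can be separated cheaply. Here we choose $b$-systems $\mathcal S$ consisting of small connected sets (bounded by $b$), so that the first outcome fails by well-linkedness and the second outcome must then fail as well; hence for every pair of members of $\mathcal S$, $\mathcal P_G(S_1,S_2)$ has no hitting set of small $\alpha$. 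By a Menger-type argument for $\alpha$ (using that a hitting set of small $\alpha$ is absent) we will extract many vertex-disjoint, in fact mutually nearly anticomplete, induced paths between each pair.

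The main obstacle is the last step: turning this rich linkage structure into an induced subdivided multiclaw $F$ or the line graph of a subdivision of $W$. We plan to build a grid-like structure: take $b$ large compared to $|V(F)|$ and $|V(W)|$, and use the $b$-sets as ``branch sets'' for the degree-$3$ vertices of $W$ (in a line graph of a subdivided wall, a branch vertex becomes a triangle, so $3$-vertex sets are natural, which is why $b$ must be at least $3$). We then route paths between them pairwise. Whenever two of the routed paths have many edges between them, we aim to extract an induced long path or claw with far-apart leaves, giving $F$. Otherwise the paths are sparsely attached, and a Ramsey/cleaning argument lets us keep an induced structure in which each branch set is a clique or a claw. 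Claws lead to $F$, and cliques (triangles) lead to the line graph of a subdivided wall. We expect this cleaning, in particular controlling the attachments of paths to branch sets of size up to $b$ without using $K_{a,a}$-freeness (which is unavailable here), to be the hardest part. We would first try to handle it by choosing the paths shortest and minimal, so that only bounded local patterns of adjacency can occur.
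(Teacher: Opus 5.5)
There is a genuine gap, and it lies in the choice of the $b$-system. For loose separability to yield a contradiction, $\mca{S}$ must satisfy two conditions: (i) no set of small $\alpha$ hits it, and (ii) a cheap separation of two of its members contradicts the absence of small-$\alpha$ balanced separators. A family of small connected sets has neither property in general. A hitting set for it need not separate anything, so well-linkedness says nothing about such hitting sets. And separating two particular small sets by some $Y$ with $\alpha(G[Y])\le d$ is perfectly compatible with well-linkedness. Your use of the definition is also reversed: if the first outcome fails, loose separability says the second \emph{holds} for some pair. If the second outcome also failed for every pair, you would already have your contradiction, so the later linkage and grid construction cannot be where the argument lives.

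The missing idea is to tie $\mca{S}$ to separators directly. Assuming $\ta(G)>10c+5d$, \Cref{lem:sep-vs-ta} gives a normal function $\we$ with no $\we$-balanced separator $Z$ having $\alpha(G[Z])\le 2c+d$; this is where $5(2c+d)$ comes from. Let $\mca{S}$ be the family of \emph{dominant} sets: those $S$ with $1\le |S|\le b$ such that $N_G[S]\cup X$ is a $\we$-balanced separator for some $X$ with $\alpha(G[X])\le c$.

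Suppose some $X_0$ with $\alpha(G[X_0])\le c$ met every dominant set. Then $b$-separator-domination of $G\setminus X_0$ (with renormalized weights) gives a nonempty $S_0\subseteq V(G)\setminus X_0$ with $N_G[S_0]\cup X_0$ balanced, i.e.\ a dominant set missed by $X_0$. So loose separability gives dominant $S_1,S_2$ (with witnesses $X_1,X_2$) and some $Y$ with $\alpha(G[Y])\le d$ hitting $\mca{P}_G(S_1,S_2)$. But $\alpha(G[X_1\cup X_2\cup Y])\le 2c+d$, so $G\setminus(X_1\cup X_2\cup Y)$ has a component of weight more than $1/2$. That component must meet both $N_G[S_1]$ and $N_G[S_2]$, and therefore contains the interior of an $(S_1,S_2)$-path avoiding $Y$, a contradiction.

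The hypotheses on $F$ and $W$ are used only to know that every induced subgraph of $G$ is $b$-separator-dominated for some $b=b(F,W)$. For a subdivided claw this is a theorem of Chudnovsky, Codsi, Lokshtanov, Milani\v{c} and Sivashankar. For a subdivided multiclaw with $r=|V(F)|$, take a maximal anticomplete family of induced copies of the $(r-1)$-subdivision of $K_{1,3}$; there are at most $r-1$ of them since $G$ is $F$-free. Add their vertex set $U$ to the dominating set, since $G\setminus N_G[U]$ excludes that subdivided claw.

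Your last step, routing paths between $b$-sets and cleaning until an induced $F$ or line graph of a subdivided wall appears, amounts to reproving that theorem from scratch, and it is left unspecified. It also rests on a ``Menger-type argument for $\alpha$'' that fails here. Take the graph obtained from $K_{n,n}$ by deleting an edge $xy$; it is $P_5$-free and contains no line graph of a subdivided wall. Every hitting set for $\mca{P}_G(x,y)$ contains a stable set of size $n-1$, yet no two $(x,y)$-paths have disjoint anticomplete interiors. Extracting pairwise anticomplete paths needs $K_{a,a}$-freeness and bounded path length, as in the proof of \Cref{thm:mainbnddsep}, and neither is available in this theorem.
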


\Cref{thm:main} is immediate from the above result:

\main*

\begin{proof}
    Let 
    $b=b_{\ref{thm:ta-vs-sep}}(F,W)$, let $c=c_{\ref{thm:mainbnddsep}}(a,b,F)$ and let $d=d_{\ref{thm:mainbnddsep}}(a,b,F)$. Let
$$c_{\ref{thm:main}}=c_{\ref{thm:main}}(a,F,W)=10c+5d.$$
    From \Cref{thm:mainbnddsep} and the choice of $c,d$, it follows that $G$ is $(b,c)$-loosely $d$-$\alpha$-separable. Thus, from \Cref{thm:ta-vs-sep} and the choice of $b$, it follows that $\ta(G)\leq 10c+5d=c_{\ref{thm:main}}$. 
\end{proof}
It remains to prove Theorems~\ref{thm:mainbnddsep} and \ref{thm:ta-vs-sep}, which we will do in Sections~\ref{sec:short} and \ref{sec:separators}.
\section{Forcing identical ends and disentangling anticomplete $b$-systems}\label{sec:disentangle}

The purpose of this section is to prove two technical lemmas that will be used in the proof of \Cref{thm:mainbnddsep}. The first lemma uses Ramsey's theorem for uniform hypergraphs (which we will also use in the next section):

\begin{theorem}[Ramsey \cite{multiramsey}]\label{thm:multiramsey}
For all $r,s,t\in \mathbb N$, there is a constant  $c_{\ref{thm:multiramsey}}=c_{\ref{thm:multiramsey}}(r,s,t)\in \mathbb N$ with the following property. Let $I$ be a non-empty set with $|I|\leq r$, let $A$ be a set with $|A|\geq c_{\ref{thm:multiramsey}}$ and let  $\Phi:\binom{A}{s}\rightarrow I$ be a function. Then there exist $i\in I$ and a $t$-subset $B$ of $A$ such that $\Phi(S)=i$ for all $S\in \binom{B}{s}$.
\end{theorem}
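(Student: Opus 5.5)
This is the classical hypergraph Ramsey theorem, and I would prove it by induction on $s$, with $r$ and $t$ arbitrary at each stage.

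\emph{Base case $s=1$.} A colouring of the $1$-subsets of $A$ with at most $r$ colours gives a monochromatic set of size $t$ as soon as $|A|\geq r(t-1)+1$, by pigeonhole. So $c_{\ref{thm:multiramsey}}(r,1,t)=r(t-1)+1$ works. For $t\leq s$ the statement is trivial: take any $t$-subset, since $\binom{B}{s}$ is empty or a singleton. So we may assume $t>s$.

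\emph{Inductive step, $s\geq 2$.} Assume the result holds for $s-1$ and all $r,t$. The plan is the standard Erd\H{o}s--Rado construction.
\begin{enumerate}
\item Set $m=r(t-1)+1$. Build a sequence $a_1,\ldots,a_m\in A$ and nested sets $A=A_0\supseteq A_1\supseteq\cdots$ as follows. At step $j$, pick $a_j\in A_{j-1}$. Colour each $(s-1)$-subset $T$ of $A_{j-1}\setminus\{a_j\}$ by $\Phi(T\cup\{a_j\})$.
\item By induction, if $|A_{j-1}|-1\geq c_{\ref{thm:multiramsey}}(r,s-1,N_j)$, this colouring has a monochromatic set $A_j$ of size $N_j$, with colour $i_j$.
\item Choose the sizes backwards. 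Require $N_m\geq 1$ and $N_{j-1}\geq c_{\ref{thm:multiramsey}}(r,s-1,N_j)+1$. This ensures that every step can be performed, and it defines $c_{\ref{thm:multiramsey}}(r,s,t)=N_0$ as a finite number.
\item The resulting sequence has a key property: for $j_1<\cdots<j_s$, the value $\Phi(\{a_{j_1},\ldots,a_{j_s}\})$ equals $i_{j_1}$. This holds because $a_{j_2},\ldots,a_{j_s}$ all lie in $A_{j_1}$, and $A_{j_1}$ is monochromatic for the colouring defined from $a_{j_1}$.
\item Now apply pigeonhole to the colours $i_1,\ldots,i_m$. This gives $t$ indices $j$ sharing a common colour $i$. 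Let $B$ be the corresponding set of $a_j$'s. Every $s$-subset of $B$ has colour equal to the colour of its smallest-index element, which is $i$.
\end{enumerate}

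The only delicate point is the bookkeeping in the backward definition of the $N_j$. In particular, $A_{j-1}\setminus\{a_j\}$ must still be large enough to apply the induction hypothesis, which is why the $+1$ appears in the recurrence.

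Finally, the hypothesis is $|I|\leq r$ rather than $|I|=r$. This is harmless, because a colouring with fewer colours is in particular a colouring with at most $r$ colours.
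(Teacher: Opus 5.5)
Your proof is correct. It is the standard Erd\H{o}s--Rado argument for the hypergraph Ramsey theorem. The backward recurrence $N_{j-1}=c_{\ref{thm:multiramsey}}(r,s-1,N_j)+1$ ensures that every step can be carried out. The only point you use without stating it is that the $a_j$ are pairwise distinct, which follows from $A_j\subseteq A_{j-1}\setminus\{a_j\}$; this is needed both for $|B|=t$ and for $\{a_{j_2},\ldots,a_{j_s}\}$ to be an $(s-1)$-subset of $A_{j_1}$. The paper itself does not prove this statement; it simply cites Ramsey's theorem, so there is no competing argument to compare against.
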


\begin{lemma}\label{lem:endsramsey}
    For all $b,s\in \mathbb N$, there is a constant $c_{\ref{lem:endsramsey}}=c_{\ref{lem:endsramsey}}(b,s)\in \mathbb N$ with the following property. Let $\{S_1,\ldots, S_{c_{\ref{lem:endsramsey}}}\}$ be a $b$-system in a graph $G$ and let $\mca{P}_{i,j}\subseteq \mca{P}_G(S_{i},S_{j})$ for each $(i,j)\in  \{1,\ldots, c_{\ref{lem:endsramsey}}\}\yl \{1,\ldots, c_{\ref{lem:endsramsey}}\}$, where $|\mca{P}_{i,j}|\geq b^2r$ for some $r\in \mathbb N$. Then there exist
    \begin{itemize}[leftmargin=8mm, itemsep=1mm]
        \item $I\subseteq \{1,\ldots, c_{\ref{lem:endsramsey}}\}$ with $|I|=s$; \item $\mca{R}_{i,j}\subseteq \mca{P}_{i,j}$ with $|\mca{R}_{i,j}|=r$ for each $(i,j)\in I\yl I$; and 
        \item vertices $x_i, y_i\in S_i$ for each $i\in I$ (where $x_i=y_i$ is possible);
    \end{itemize}
    such that for every $(i,j)\in \{1,\ldots, c_{\ref{lem:endsramsey}}\}\yl \{1,\ldots, c_{\ref{lem:endsramsey}}\}$, we have $\mca{R}_{i,j}\subseteq \mca{P}_G(x_{i},y_{j})$.
\end{lemma}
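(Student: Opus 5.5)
The plan is to combine a pigeonhole argument on the ends of the paths within each pair with Ramsey's theorem (\Cref{thm:multiramsey}) for $2$-subsets, which makes the choice of ends consistent across pairs. I read the final condition as ranging over $(i,j)\in I\yl I$, since $\mca{R}_{i,j}$ is only defined there. I will set
$$c_{\ref{lem:endsramsey}}(b,s)=c_{\ref{thm:multiramsey}}(b^2,2,s).$$

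First I fix labels. For each $i$, enumerate $S_i=\{v_i^1,\ldots,v_i^{|S_i|}\}$; this is possible since $1\leq |S_i|\leq b$. Every path $P\in \mca{P}_G(S_i,S_j)$ has a well-defined pair of ends $(u,w)$ with $u\in S_i$ and $w\in S_j$:
\begin{itemize}
\item if $P$ has positive length, $u$ is its unique end in $S_i\setminus S_j$ and $w$ is its unique end in $S_j\setminus S_i$;
\item if $P$ has length zero, then $u=w$ is its single vertex, which lies in $S_i\cap S_j$.
\end{itemize}
In both cases $P\in \mca{P}_G(u,w)$, and this is immediate from the definition of $(X,Y)$-paths. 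Record the \emph{type} of $P$ as the pair $(p,q)\in\{1,\ldots,b\}^2$ with $u=v_i^p$ and $w=v_j^q$.

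Second, for each $(i,j)$ with $i<j$, there are at most $b^2$ possible types and $|\mca{P}_{i,j}|\geq b^2r$. By pigeonhole there is a type $\Phi(\{i,j\})=(p,q)$ realized by at least $r$ paths of $\mca{P}_{i,j}$. This defines a colouring
$$\Phi:\binom{\{1,\ldots,c_{\ref{lem:endsramsey}}\}}{2}\rightarrow \{1,\ldots,b\}^2,$$
which uses at most $b^2$ colours.

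Third, apply \Cref{thm:multiramsey} with $2$-subsets and $b^2$ colours. This gives an $s$-set $I$ and a single type $(p,q)$ with $\Phi(\{i,j\})=(p,q)$ for all $i<j$ in $I$. Define
$$x_i=v_i^p,\qquad y_i=v_i^q \qquad (i\in I).$$
These are well defined whenever $i$ is the smaller, respectively larger, element of some pair in $I$, because the type $(p,q)$ is realized there, so $p\leq|S_i|$, respectively $q\leq |S_j|$. For the minimum and maximum of $I$, one of the two vertices may be undefined by this rule; there I set it to an arbitrary vertex of $S_i$, which is harmless since it is never used. For each $(i,j)\in I\yl I$, let $\mca{R}_{i,j}$ be any $r$ paths of $\mca{P}_{i,j}$ of type $(p,q)$; then $\mca{R}_{i,j}\subseteq \mca{P}_G(x_i,y_j)$.

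The only genuine obstacle is consistency: a single $x_i$ must serve for every $j>i$, and a single $y_j$ for every $i<j$. Per-pair pigeonhole alone does not give this, and it is exactly what the Ramsey step supplies. The rest consists of bookkeeping checks: the orientation of the ends, the zero-length paths with $x_i=y_j$, and whether $p\leq |S_i|$.
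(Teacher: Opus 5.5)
Your proposal is correct and is essentially the paper's proof: the same constant $c_{\ref{thm:multiramsey}}(b^2,2,s)$, a per-pair pigeonhole on end-labels (the paper uses injections $f_i:S_i\to\{1,\ldots,b\}$ where you use enumerations), and a Ramsey colouring of pairs by the resulting label pair. Your reading of the final condition as ranging over $I\yl I$ matches what the paper actually proves. Your remark that $x_{\max I}$ and $y_{\min I}$ may need to be chosen arbitrarily is a small point of care that the paper's proof glosses over.
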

\begin{proof}
Let
    $$c_{\ref{lem:endsramsey}}=c_{\ref{lem:endsramsey}}(b,s)=c_{\ref{thm:multiramsey}}(b^2,2,s).$$
    
For each $i\in \{1,\ldots, c_{\ref{lem:endsramsey}}\}$, choose an injection $f_i:S_i\to \{1,\ldots, b\}$ (which exists because $1\leq |S_i|\leq b$). For every $(i,j)\in \{1,\ldots, c_{\ref{lem:endsramsey}}\}\yl \{1,\ldots, c_{\ref{lem:endsramsey}}\}$, since $|\mca{P}_{i,j}|\geq b^2r$, it follows that there exist 
$(k_{i,j},k_{j,i})\in \{1,\ldots, b\}\times \{1,\ldots, b\}$
and $\mca{R}_{i,j}\subseteq \mca{P}_{i,j}$ with $|\mca{R}_{i,j}|=r$ such that every path in $\mca{R}_{i,j}$ has $f^{-1}_{i}(k_{i,j})$ and $f^{-1}_{j}(k_{j,i})$ as its ends. 

Let $\Phi:\binom{\{1,\ldots, c_{\ref{lem:endsramsey}}\}}{2}\to \{1,\ldots, b\}\times \{1,\ldots, b\}$
be the function that maps each $2$-subset $\{i,j\}$ of $\{1,\ldots, c_{\ref{lem:endsramsey}}\}$ with $i<j$ to $\Phi(\{i,j\})=(k_{i,j},k_{j,i})$. Since $c_{\ref{lem:endsramsey}}=c_{\ref{thm:multiramsey}}(b^2,2,s)$, it follows from \Cref{thm:multiramsey} applied to $\Phi$ that there exist $k_1,k_2\in \{1,\ldots, b\}$ and $I\subseteq \{1,\ldots, c_{\ref{lem:endsramsey}}\}$ with $|I|=s$ such that for every $(i,j)\in I\yl I$, we have $\Phi(\{i,j\})=(k_{i,j},k_{j,i})=(k_1,k_2)$. Now, for each $i\in I$, let $x_i=f^{-1}_i(k_1)$ and let $y_i=f^{-1}_i(k_2)$; thus, $x_{i},y_i\in S_i$. Also, for all $(i,j)\in I\yl I$, every path in $\mca{R}_{i,j}$ has $f^{-1}_{i}(k_{i,j})=f^{-1}_{i}(k_1)=x_{i}$ and $f^{-1}_{j}(k_{j,i})=f^{-1}_{j}(k_2)=y_j$ as its ends. This completes the proof of \Cref{lem:endsramsey}.
\end{proof}

For our second lemma, along with \Cref{thm:multiramsey}, we need the following special case of the Graham-Rothschild theorem \cite{GRgeneral} (which will be used in the next section as well):

\begin{theorem}[Graham, Rothschild, Spencer \cite{productramsey}]\label{thm:productramsey}
For all  $r,s,t\in \mathbb N$, there is a constant  $c_{\ref{thm:productramsey}}=c_{\ref{thm:productramsey}}(r,s,t)\in \mathbb N$ with the following property. Let $I$ be a non-empty set with $|I|\leq r$, let $A_1,\ldots, A_s$ be sets of cardinality at least $c_{\ref{thm:productramsey}}$, and let $\Phi:A_1\times \cdots \times A_s\rightarrow I$ be a function. Then there exist $i\in I$ as well as a $t$-subset $B_j$ of $A_j$ for each $j\in \{1,\ldots, s\}$, such that $\Phi(b)=i$ for all $b\in B_1\times \cdots\times B_s$.
\end{theorem}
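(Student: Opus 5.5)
The statement immediately above is the product Ramsey theorem, Theorem~\ref{thm:productramsey}. I would prove it by induction on $s$, treating one coordinate at a time and using the pigeonhole principle to pass the remaining coordinates down to the induction hypothesis.

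\emph{Base case} $s=1$. Here $\Phi:A_1\to I$ is a colouring of a single set with at most $r$ colours. If $c(r,1,t)=r(t-1)+1$, then pigeonhole gives some colour $i$ whose class has at least $t$ elements. We take $B_1$ to be any $t$ of them.

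\emph{Inductive step.} Assume $s\geq 2$ and that $c'=c_{\ref{thm:productramsey}}(r,s-1,t)$ exists. Shrink $A_1,\ldots,A_{s-1}$ to subsets $A'_1,\ldots,A'_{s-1}$ of size exactly $c'$, and write $M=A'_1\times\cdots\times A'_{s-1}$. Then $|M|=(c')^{s-1}$. Each $a\in A_s$ determines a function $\Phi_a:M\to I$, $m\mapsto \Phi(m,a)$. There are at most $r^{(c')^{s-1}}$ such functions. So if
$$|A_s|\geq N:=r^{(c')^{s-1}}(t-1)+1,$$
pigeonhole gives a $t$-subset $B_s\subseteq A_s$ on which $\Phi_a$ is one and the same function $\Psi:M\to I$.

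Next apply the induction hypothesis to $\Psi$ on $A'_1\times\cdots\times A'_{s-1}$. This gives a colour $i$ and $t$-subsets $B_j\subseteq A'_j$ with $\Psi\equiv i$ on $B_1\times\cdots\times B_{s-1}$. Now take any $(b_1,\ldots,b_s)\in B_1\times\cdots\times B_s$. Since $b_s\in B_s$, we have $\Phi_{b_s}=\Psi$, so $\Phi(b_1,\ldots,b_s)=\Psi(b_1,\ldots,b_{s-1})=i$, as required.

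\emph{Choice of constant.} Set $c_{\ref{thm:productramsey}}(r,s,t)=\max(c',N)$. Then every $A_j$ has size at least $c'$, so the shrinking step is valid, and $A_s$ has size at least $N$.

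There is no real obstacle here. The only point needing care is the order of the argument: the pigeonhole is applied to whole functions on the already-shrunk product $M$, so that the constant depends only on $r,s,t$ and the final box is fully monochromatic. The bounds are tower-type, but the statement asks only for existence. Alternatively, the result can simply be quoted from \cite{productramsey}.
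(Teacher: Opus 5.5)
Your proof is correct and complete. The key point is that you fix the box $A'_1\times\cdots\times A'_{s-1}$ of side exactly $c'$ \emph{before} applying pigeonhole to the restricted colourings $\Phi_a$; this is what makes $N$ depend only on $(r,s,t)$. Taking the maximum of $c'$ and $N$ then covers both the shrinking step and the pigeonhole step. The argument also works unchanged if some $A_j$ is infinite, since there are only finitely many classes.

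The paper gives no proof of this statement. It quotes the result from Graham, Rothschild and Spencer as a special case of the much more general Graham--Rothschild parameter-set theorem. Your induction on $s$ with pigeonhole over whole functions is the standard elementary proof of the product Ramsey theorem. It makes the result self-contained, at the cost of tower-type bounds. That cost is harmless here, because the paper only uses the existence of $c_{\ref{thm:productramsey}}(r,s,t)$, never its growth rate. It does so in Lemma~\ref{lem:ramseysmallsystems} and, with $s=2$, in the proof of Theorem~\ref{thm:manygraphs}.
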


\begin{lemma}\label{lem:ramseysmallsystems}
    For all $a,b,r,s,t\in \mathbb N$, there are constants $c_{\ref{lem:ramseysmallsystems}}=c_{\ref{lem:ramseysmallsystems}}(a,b,r,s,t)\in \mathbb N$ and $d_{\ref{lem:ramseysmallsystems}}=d_{\ref{lem:ramseysmallsystems}}(a,b,r,s,t)\in \mathbb N$ with the following property. Let $\{S_1,\ldots, S_{c_{\ref{lem:ramseysmallsystems}}}\}$ be an anticomplete $b$-system in a $K_{a,a}$-free graph $G$. For each $(i,j)\in  \{1,\ldots, c_{\ref{lem:ramseysmallsystems}}\}\yl \{1,\ldots, c_{\ref{lem:ramseysmallsystems}}\}$, let $\mca{R}_{i,j}$ be an anticomplete $r$-system in $G$ with $|\mca{R}_{i,j}|\geq d_{\ref{lem:ramseysmallsystems}}$. Then there exist
    
    \begin{itemize}[leftmargin=8mm, itemsep=1mm]
        \item $I\subseteq \{1,\ldots, c_{\ref{lem:ramseysmallsystems}}\}$ with $|I|=s$; and
        \item $\mca{T}_{i,j}\subseteq \mca{R}_{i,j}$ with $|\mca{T}_{i,j}|=t$ for each $(i,j)\in I\yl I$;
    \end{itemize}
    such that the following hold:
    
    \begin{enumerate}[{\rm (a)}, leftmargin=8mm, itemsep=1mm]

\item\label{lem:ramseysmallsystems_a} For all $(i,j),(i',j')\in I\yl I$, each $T\in \mca{T}_{i,j}$ and every $T'\in \mca{T}_{i',j'}$, the sets $T,T'$ are anticomplete in $G$.
        \item\label{lem:ramseysmallsystems_b} For all distinct $i,i',i''\in I$ with $i'<i''$ and every $T\in \mca{T}_{i',i''}$, the sets $S_i,T$ are anticomplete in $G$.
    \end{enumerate}
\end{lemma}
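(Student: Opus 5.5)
The plan is to use $K_{a,a}$-freeness to show that a set of bounded size has an edge to only few members of a large anticomplete system. Then Ramsey (\Cref{thm:multiramsey}) and product Ramsey (\Cref{thm:productramsey}) let us discard all the "bad" adjacencies.

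\textbf{Key observation.} Let $X$ be a set with $|X|\le m$, and let $\mca{R}$ be an anticomplete $r$-system in a $K_{a,a}$-free graph $G$. Then fewer than $N=N(a,m)$ members of $\mca{R}$ have a neighbor in $X$ (or meet $X$), where $N\approx m(a-1)+m$.

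\emph{Proof of the observation.} If many members $R$ of $\mca{R}$ are adjacent to $X$, pick one vertex $v_R\in R$ with a neighbor in $X$ for each of them.
\begin{itemize}
\item The chosen vertices $v_R$ lie in pairwise anticomplete sets, so they form a stable set.
\item By pigeonhole, some $x\in X$ is adjacent to at least $a$ of them.
\item This alone gives only a star, not $K_{a,a}$. So we argue more carefully: pigeonhole to some $x$ having many neighbors among the $v_R$, remove those $R$, and iterate.
\end{itemize}
The cleanest formulation is the following. If more than $m(a-1)$ members are adjacent to $X$, then some $x\in X$ sees at least $a$ of them. This does not immediately contradict $K_{a,a}$-freeness, since a star is $K_{1,a}$. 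Hence I would instead keep the bad edges and remove them via Ramsey-type cleaning. The star bound is still useful when $X$ itself ranges over $a$ pairwise anticomplete sets (the $S_i$, or members of other $\mca{R}$'s):
\begin{itemize}
\item Suppose $a$ sets among pairwise anticomplete bounded sets $X_1,\dots,X_a$ are each adjacent to the same $a$ members $R_1,\dots,R_a$ of an anticomplete system.
\item Choosing endpoint vertices by pigeonhole over positions (using the injections into $\{1,\ldots,r\}$ and $\{1,\ldots,b\}$, as in \Cref{lem:endsramsey}) yields an induced $K_{a,a}$.
\end{itemize}
So complete "bipartite adjacency patterns" of size $a\times a$ between two anticomplete families, with position-homogeneous endpoints, are forbidden. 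This is the real tool.

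\textbf{Step 1 (condition (b)).} Fix injections labelling the vertices of each $S_i$ by $\{1,\ldots,b\}$ and of each $T\in\mca{R}_{i,j}$ by $\{1,\ldots,r\}$. For each triple $i<i'<i''$ (in each of the three relative orders of $i$ versus $i',i''$), colour the pair $(i,T)$ with $T\in\mca{R}_{i',i''}$ by the set of label pairs $(k,l)$ such that the $k$th vertex of $S_i$ is adjacent to the $l$th vertex of $T$. This gives boundedly many colours.

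We must homogenize over both the index choices and the members of $\mca{R}$ simultaneously. I would first apply \Cref{thm:multiramsey} on $4$-subsets (or on $3$-subsets, with an order-type parameter) of indices. The colour of a $3$-subset is the "majority pattern" of how $S_i$ relates to many members of $\mca{R}_{i',i''}$. By pigeonhole on $d$ large, for each triple we choose a large subfamily of $\mca{R}_{i',i''}$ on which the adjacency pattern to $S_i$ is constant.

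This is subtle, because one subfamily must work for all the $S_i$ at once. Therefore, for a fixed pair $(i',i'')$, I intersect over all $i\in I$. Since $|I|$ will be bounded once we first shrink the index set to size $s'=$ (something like $a\cdot s$), the intersection stays large if $d$ is huge.

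Concretely, the order of operations is:
\begin{enumerate}
\item First pick any $s'$ indices.
\item For each pair $(i',i'')$, refine $\mca{R}_{i',i''}$ successively over the $s'-2$ other indices, so that each $S_i$ has a constant adjacency pattern to all remaining members.
\item Apply Ramsey on triples with this pattern as colour, to get $I$ of size $\ge as$ where the pattern depends only on order type.
\end{enumerate}
If the pattern is nonempty, then taking $a$ indices $i$ and $a$ members $T$ yields an induced $K_{a,a}$: the $S_i$ are anticomplete to each other, the $T$ are anticomplete to each other, and the fixed labels give the vertices. This is a contradiction, so the pattern is empty, which is exactly (b).

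\textbf{Step 2 (condition (a)).} Handle adjacency between members of different families $\mca{R}_{i,j}$ and $\mca{R}_{i',j'}$ similarly. For each ordered pair of index pairs, colour $\mca{R}_{i,j}\times\mca{R}_{i',j'}$ by the label-adjacency pattern and apply \Cref{thm:productramsey} to get $t'$-subsets with a constant pattern. Iterate over the boundedly many pairs of pairs within the bounded index set. Then Ramsey over $4$-subsets of indices by order type makes the pattern uniform.

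A nonempty pattern between two families with $t'\ge a$ gives $K_{a,a}$ directly: $a$ members on each side, anticomplete within each side, with all cross pairs adjacent at fixed labels. This already yields a contradiction without the index Ramsey, so product Ramsey alone suffices for (a). Members of the same family are already anticomplete by hypothesis.

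\textbf{Main obstacle.} The expected difficulty is the bookkeeping: choosing the order of shrinking (first the index set, then the $\mca{R}$'s) so that all constants depend only on $a,b,r,s,t$. It is also worth checking in (b) that the triple case really needs the order-type homogenization, or whether a direct per-pair pigeonhole over $a$ indices already suffices.
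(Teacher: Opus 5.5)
Your plan works once a few details are repaired. It uses the same ingredients as the paper: label patterns, \Cref{thm:multiramsey} on $3$-subsets of indices, \Cref{thm:productramsey} on families, and a $K_{a,a}$ extracted from a nonempty homogeneous pattern. The difference is that you assemble them in the opposite order.

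\textbf{The paper's order.} The paper proves (a) first, by one application of \Cref{thm:productramsey} to all $\binom{c}{2}$ families simultaneously. Your iterated two-factor version of this step is equivalent. Only then does the paper prove (b), colouring each triple of indices by the pattern between $S_i$ and the union $U_{i',i''}$ of the already trimmed family. Its $K_{a,a}$ for (b) pairs $a$ sets $S_i$ with one vertex from each of $a$ families on pairwise disjoint index pairs. The stability of that second side is exactly what (a) supplies.

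\textbf{Your order.} You first shrink the index set to a bounded $s'$. You then refine each $\mca{R}_{i',i''}$ by pigeonhole over the other indices, so that each $S_i$ sees all surviving members with one fixed pattern. The second side of your $K_{a,a}$ then comes from $a$ members of a single family. So (b) needs nothing from (a), and (b) survives the shrinking in Step 2 because it is a property of individual members.

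\textbf{Trade-off.} The paper's order avoids your per-index refinement, which costs a factor $2^{O(brs')}$ in $d$; it can do so because its families already have bounded size when triples are coloured. Your order makes (b) self-contained and needs only $|I|\geq \max\{a+2,s\}$ in the triple Ramsey step.

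\textbf{Repairs needed.}
\begin{enumerate}
\item The opening claim of your key observation is false as stated. One vertex can be adjacent to every member of an arbitrarily large anticomplete system, and stars are $K_{a,a}$-free for $a\geq 2$. You rightly abandon it, so drop it.
\item Anticompleteness includes disjointness, but your colours in both steps record only adjacencies. As written, you never exclude $S_i\cap T\neq\varnothing$, or $T\cap T'\neq\varnothing$ for members of different families. Add the equality relation to each pattern, as the paper does with $\phi_1$ and $\phi_{1,j}$. If $(m,n)$ lies in a homogeneous equality pattern, take one member $T$ (or $S_i$) on one side and two distinct members $T',T''$ of the homogeneous family on the other. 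Then the $m$th vertex of $T$ is the $n$th vertex of both $T'$ and $T''$, so $T'\cap T''\neq\varnothing$, a contradiction. In Step 1 you could instead discard the at most $b$ members of a family that meet a given $S_i$.
\item $s'$ cannot be ``about $as$''. It must be the Ramsey number $c_{\ref{thm:multiramsey}}(2^{O(br)},3,\max\{a+2,s\})$. This is legitimate because the number of colours depends only on $b$ and $r$. Then $d$ must be $2^{O(brs')}$ times the starting size needed for the iterated product Ramsey in Step 2.
\end{enumerate}

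\textbf{Your closing question.} A per-pair pigeonhole only shows that each pair $(i',i'')$ has fewer than $(a-1)br+1$ bad indices $i$. You still need some selection argument to find $I$ avoiding all bad triples, so keep the triple Ramsey step.
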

\begin{proof}
    Let
$$c_{\ref{lem:ramseysmallsystems}}=c_{\ref{lem:ramseysmallsystems}}(a,b,r,s,t)=c_{\ref{thm:multiramsey}}\left(2^{6brt},3,\max\{3a,s,6\}\right).$$
Let $c=\binom{c_{\ref{lem:ramseysmallsystems}}}{2}$, and let
$$d_{\ref{lem:ramseysmallsystems}}=d_{\ref{lem:ramseysmallsystems}}(a,b,r,s,t)=c_{\ref{thm:productramsey}}\left(2^{2c^2r^2}, c, \max\{a,t,2\}\right).$$

   We begin with the following:

    \sta{\label{st:antibetweenpairs} There exists $\mca{T}_{i,j}\subseteq \mca{R}_{i,j}$ with $|\mca{T}_{i,j}|\geq t$ for each $(i,j)\in \{1,\ldots, c_{\ref{lem:ramseysmallsystems}}\}\yl \{1,\ldots, c_{\ref{lem:ramseysmallsystems}}\}$, such that for all $(i,j),(i',j')\in \{1,\ldots, c_{\ref{lem:ramseysmallsystems}}\}\yl \{1,\ldots, c_{\ref{lem:ramseysmallsystems}}\}$, each $T\in \mca{T}_{i,j}$ and every $T'\in \mca{T}_{i',j'}$, the sets $T,T'$ are anticomplete in $G$.}

    Let $\{i_1,j_1\},\ldots, \{i_c,j_c\}$ be an enumeration of $2$-subsets of $\{1,\ldots, c_{\ref{lem:ramseysmallsystems}}\}$, where $i_k<j_k$ for every $k\in \{1,\ldots, c\}$. For each $R\in \bigcup_{k=1}^c\mca{R}_{i_k,j_k}$, choose an injection $f_R:R\to \{1,\ldots, r\}$ (which exists because $1\leq |R|\leq r$). Let 
    $$\mca{I}=\{1,\ldots, c\}\times \{1,\ldots, c\}\times \{1,\ldots, r\}\times \{1,\ldots, r\}.$$
    Let
$\phi_1,\phi_2:\mca{R}_{i_1,j_1}\times \cdots \times \mca{R}_{i_c,j_c}\to 2^{\mca{I}}$ be functions with the rules
$$\phi_1(R_1,\ldots, R_c)= \left\{(k,l,m,n):\ k<l,\ f_{R_k}^{-1}(m)= f_{R_l}^{-1}(n)\right\}\subseteq \mca{I};$$
$$\phi_2(R_1,\ldots, R_c)= \left\{(k,l,m,n):\ k<l,\ f_{R_k}^{-1}(m)f_{R_l}^{-1}(n)\in E(G)\right\}\subseteq \mca{I};$$
and let  
$\Phi:\mca{R}_{i_1,j_1}\times \cdots \times \mca{R}_{i_c,j_c}\to 2^{\mca{I}}\times 2^{\mca{I}}$ be the function with the rule
$$\Phi(R_1,\ldots, R_c) =\left(\phi_1(R_1,\ldots, R_c),\phi_2(R_1,\ldots, R_c)\right).$$
 Since $|2^{\mca{I}}\times 2^{\mca{I}}|=2^{2c^2r^2}$ and $|\mca{R}_{i_1,j_1}|= \cdots = |\mca{R}_{i_c,j_c}|=d_{\ref{lem:ramseysmallsystems}}=c_{\ref{thm:productramsey}}(2^{2c^2r^2},c,\max\{a,t,2\})$, by \Cref{thm:productramsey} applied to $\Phi$, there exist $\mca{I}_1,\mca{I}_2\subseteq \mca{I}$, as well as $\mca{T}_{i_{\ell},j_{\ell}}\subseteq \mca{R}_{i_{\ell},j_{\ell}}$ with $|\mca{T}_{i_{\ell},j_{\ell}}|=\max\{a,t,2\}$ for each $\ell\in \{1,\ldots, c\}$, such that 
 $$\Phi(T_1,\ldots, T_c)=\left(\phi_1(T_1,\ldots, T_c),\phi_2(T_1,\ldots, T_c)\right)=(\mca{I}_1,\mca{I}_2)$$
 for every $(T_1,\ldots, T_c)\in \mca{T}_{i_1,j_1}\times \cdots \times \mca{T}_{i_c,j_c}$. Assume $(k,l,m,n)\in \mca{I}_1$. Since $|\mca{T}_{i_k,j_k}|,|\mca{T}_{i_l,j_l}|\geq 2$, we may choose $T\in \mca{T}_{i_k,j_k}$ and distinct $T',T''\in \mca{T}_{i_l,j_l}$. Since $(k,l,m,n)\in \mca{I}_1$, it follows that $f_{T}^{-1}(m)= f_{T'}^{-1}(n)$ and $f_{T}^{-1}(m)= f_{T''}^{-1}(n)$. But then we have $f_{T'}^{-1}(n)= f_{T''}^{-1}(n)\in T'\cap T''$, contradicting the assumption that $\mca{T}_{i_l,j_l}\subseteq \mca{R}_{i_l,j_l}$ is an anticomplete $r$-system in $G$. Assume that $(k,l,m,n)\in \mca{I}_2$. Since $|\mca{T}_{i_k,j_k}|=|\mca{T}_{i_l,j_l}|\geq a$, we may choose $\mca{A}\subseteq \mca{T}_{i_k,j_k}$ and $\mca{A}'\subseteq \mca{T}_{i_l,j_l}$ with $|\mca{A}|=|\mca{A}'|=a$. Let $A=\{f_{T}^{-1}(m):T\in \mca{A}\}$ and let $A'=\{f_{T}^{-1}(n): T\in \mca{A}'\}$. Since $\mca{T}_{i_k,j_k}\subseteq \mca{R}_{i_k,j_k}$ and $\mca{T}_{i_l,j_l}\subseteq \mca{R}_{i_l,j_l}$ are anticomplete $r$-systems in $G$, it follows that $A,A'$ are stable sets in $G$ with $|A|=|A'|=a$. Since $(k,l,m,n)\in I_2$, it follows that $A$ is complete to $A'$ in $G$ (in particular, $A\cap A'=\varnothing$). But then $G[A\cup A']$ is isomorphic to $K_{a,a}$, again a contradiction. We deduce that $\mca{I}_1=\mca{I}_2=\varnothing$; and so for all $k,l\in \{1,\ldots, c\}$, each $T\in \mca{T}_{i_k,j_k}$ and every $T'\in \mca{T}_{i_l,j_l}$, the sets $T,T'$ are anticomplete in $G$. Moreover, recall that $|\mca{T}_{i_1,j_1}|= \cdots = |\mca{T}_{i_c,j_c}|=\max\{a,t,2\}\geq t$. This proves \eqref{st:antibetweenpairs}.
 \medskip

Henceforth, let $(\mca{T}_{i,j}: 1\leq i<j\leq c_{\ref{lem:ramseysmallsystems}})$ be as given by \eqref{st:antibetweenpairs}. For each $i\in \{1,\ldots, c_{\ref{lem:ramseysmallsystems}}\}$, choose an injection $g_i:S_i\to \{1,\ldots, b\}$ (which exists because $1\leq |S_i|\leq b$). For each $(i,j)\in \{1,\ldots, c_{\ref{lem:ramseysmallsystems}}\}\yl \{1,\ldots, c_{\ref{lem:ramseysmallsystems}}\}$, let
 $U_{i,j}=\bigcup_{T\in \mca{T}_{i,j}}T$. Then $t\leq |U_{i,j}|\leq rt$, and so we may choose an injection 
 $h_{i,j}:U_{i,j} \to \{1,\ldots, rt\}$.
 
Let 
$$\mca{J}=\{1,\ldots, b\}\times \{1,\ldots, rt\}.$$
For each $j\in \{1,2,3\}$, let 
$$\phi_{1,j},\phi_{2,j}:\binom{\{1,\ldots, c_{\ref{lem:ramseysmallsystems}}\}}{3}\to 2^{\mca{J}}$$
be functions with the following rules: 

Write $\{j',j''\}=\{1,2,3\}\setminus\{j\}$ where $j'<j''$. For every $Z=\{i_1,i_2,i_3\}\subseteq \{1,\ldots, c_{\ref{lem:ramseysmallsystems}}\}$  with $i_1<i_2<i_3$, we have
$$\phi_{1,j}(Z)=\left\{(k,l):\  g_{i_j}^{-1}(k)= h_{i_{j'},i_{j''}}^{-1}(l)\right\}\subseteq \mca{J}$$
and
$$\phi_{2,j}(Z) = \left\{(k,l):\  g_{i_j}^{-1}(k)h_{i_{j'},i_{j''}}^{-1}(l)\in E(G)\right\} \subseteq \mca{J}.$$
Let 
$$\Phi:\binom{\{1,\ldots, c_{\ref{lem:ramseysmallsystems}}\}}{3}\to \mca{I}=\underbrace{2^{\mca{J}}\times \cdots \times 2^{\mca{J}}}_{6\text{ times}}$$
be the function with the rule: 
$$\Phi(Z)=\left(\phi_{1,j}(Z), \phi_{2,j}(Z): j=1,2,3\right).$$

 Since $|\mca{I}|=2^{6brt}$ and $c_{\ref{lem:ramseysmallsystems}}=c_{\ref{thm:multiramsey}}(2^{6brt},3,\max\{3a,s,6\})$, by \Cref{thm:multiramsey} applied to $\Phi$, there exist $J_{1,1}, J_{1,2}, J_{1,3}, J_{2,1}, J_{2,2}, J_{2,3}\subseteq  \mca{J}$ and $I\subseteq \{1,\ldots, c_{\ref{lem:ramseysmallsystems}}\}$ with $|I|=\max\{3a,s,6\}$ such that for every $3$-subset $Z$ of $I$ and all $j\in \{1,2,3\}$, we have $\phi_{1,j}(Z)=J_{1,j}$ and $\phi_{2,j}(Z)=J_{2,j}$. We further claim that:
 
 \sta{\label{st:J1empty} $J_{1,1}=J_{1,2}= J_{1,3}=\varnothing$.}

 Suppose not. By symmetry, we may assume that $J_{1,1}\neq\varnothing$; say, $(k,l)\in J_{1,1}$. Since $|I|\geq 6$, we may choose $i_1,i'_1,i_2,i'_2,i_3,i'_3\in I$ with $i_1<i'_1<i_2<i'_2<i_3<i'_3$. Since 
 $$(k,l)\in J_{1,1}=\phi_{1,1} \left(\left\{i_1,i_{2},i_{3}\right\}\right)=\phi_{1,1}\left(\left\{i'_1,i_{2},i_{3}\right\}\right),$$
 it follows that
 $g_{i_1}^{-1}(k)= h_{i_{2},i_{3}}^{-1}(l)$ and $g_{i'_1}^{-1}(k)= h_{i_{2},i_{3}}^{-1}(l)$. But then we have
 $$g_{i_1}^{-1}(k)= g_{i'_1}^{-1}(k)\in S_{i_1}\cap S_{i'_1};$$
 which contradicts the assumption that $S_1,\ldots, S_{c_{\ref{lem:ramseysmallsystems}}}$ are pairwise anticomplete in $G$. This proves \eqref{st:J1empty}.

  \sta{\label{st:J2empty} $J_{2,1}=J_{2,2}= J_{2,3}=\varnothing$.}

 Suppose not. By symmetry, we may assume that $J_{2,1}\neq\varnothing$; say, $(k,l)\in J_{2,1}$. Since $|I|\geq 3a$, there exist $I_1,I_2,I_3\subseteq I$ with $|I_1|=|I_2|=|I_3|=a$ such that $\max I_1<\min I_2$ and  $\max I_2<\min I_3$. Choose a bijection $\psi: I_{2}\to I_{3}$. Let 
 $A=\{g_{i}^{-1}(k): i\in I_1\}$ and let $A'=\{h_{i,\psi(i)}^{-1}(l):i\in I_{2}\}$. Since $S_1,\ldots, S_{c_{\ref{lem:ramseysmallsystems}}}$ are pairwise anticomplete in $G$, it follows that $A$ is a stable set in $G$ with $|A|=a$. By \eqref{st:antibetweenpairs}, the sets $(U_{i,\psi(i)}:i\in I_{2})$ are pairwise anticomplete in $G$, which in turn implies that $A'$ is a stable set in $G$ with $|A'|=a$. Moreover, $A$ is complete to $A'$ in $G$ because $(k,l)\in J_{2,1}=\phi_{2,1}(\{i_1,i_2,i_3\})$ for all $(i_1,i_2,i_3)\in I_1\times I_2\times I_3$. But then $G[A\cup A']$ is isomorphic to $K_{a,a}$, a contradiction. This proves \eqref{st:J2empty}.
 \medskip

 Now, by \eqref{st:antibetweenpairs}, $(\mca{T}_{i,j}: (i,j)\in I\yl I)$ satisfy \ref{lem:ramseysmallsystems}\ref{lem:ramseysmallsystems_a}, and from \eqref{st:J1empty} and \eqref{st:J2empty},  it follows that for all distinct $i,i',i''\in I$ with $i'<i''$, the sets $S_i,U_{i',i''}$ are anticomplete in $G$; thus, \ref{lem:ramseysmallsystems}\ref{lem:ramseysmallsystems_b} hold as well. Recall also that $|I|=\max\{3a,s,6\}\geq s$, and $|\mca{T}_{i,j}|=t$ for every $(i,j)\in I\yl I$. This completes the proof of \Cref{lem:ramseysmallsystems}.
\end{proof}

\section{Several $K_{a,a}$-free graphs with a dense union}\label{sec:many}

In this section, we prove the following, which is at the heart of the proof of \Cref{thm:main}:

\begin{theorem}\label{thm:manygraphs}
For all $a,b,q\in \mathbb N$, there is a constant $c_{\ref{thm:manygraphs}}=c_{\ref{thm:manygraphs}}(a,b,q)\in \mathbb N$ with the following property. Let $G_1,\ldots, G_b$ be $K_{a,a}$-free graphs with $V(G_1)=\cdots=V(G_b)=V$ and $\alpha(G_1\cup\cdots\cup G_b)<q$. Then there exist $V_1,\ldots, V_b\subseteq V$ with $V_1\cup \cdots \cup V_b=V$ such that 
for every $i\in \{1,\ldots, b\}$, we have $\alpha(G_i[V_i])\leq c_{\ref{thm:manygraphs}}$.
\end{theorem}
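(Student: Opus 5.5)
The plan is to combine a ``bounded witness'' lemma coming from $K_{a,a}$-freeness with induction on $q$. The base case $q=1$ is vacuous, since then $V=\varnothing$. For $q=2$ the union is complete, and I expect the same argument as the inductive step to handle it.

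\textbf{Step 1 (witness lemma).} Let $S$ be a stable set of $G_i$ with $|S|=N$, and let $A_i(S)$ be the set of vertices of $V\setminus S$ that have at least $a$ neighbours in $S$ in $G_i$. Then
$$\alpha(G_i[A_i(S)\cup S])\leq N+(a-1)\binom{N}{a}.$$
To see this, take a stable set $T$ of $G_i$ inside $A_i(S)$ and assign to each $v\in T$ an $a$-subset of $N_{G_i}(v)\cap S$. If $T$ is larger than $(a-1)\binom{N}{a}$, then some $a$ vertices of $T$ receive the same $a$-subset. Since $S$ and $T$ are disjoint stable sets of $G_i$, these $2a$ vertices induce $K_{a,a}$ in $G_i$, a contradiction. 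Thus every vertex that is ``heavily attached'' in $G_i$ to a bounded-size $G_i$-stable set can be placed into $V_i$ at bounded cost.

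\textbf{Step 2 (reduction to light vertices).} Fix $N=N(a,b,q)$, to be chosen large. If $\alpha(G_i)<N$ for some $i$, take $V_i=V$ and the other $V_j$ empty. Otherwise, pick for each $i$ a stable set $S_i$ of $G_i$ of size $N$, put $A_i(S_i)\cup S_i$ into $V_i$, and let $U$ be the set of remaining vertices. Each $u\in U$ has fewer than $a$ $G_i$-neighbours in $S_i$ for every $i$, hence fewer than $ab$ neighbours in $S_i$ in the union $H=G_1\cup\cdots\cup G_b$. I will refine the $S_i$ before this step using \Cref{thm:multiramsey}: colour each pair in $S_1$ by the set of indices $j$ with the pair in $E(G_j)$. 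Since $\alpha(H[S_1])<q$ and the colour $\varnothing$ cannot occur on a large homogeneous set, a large homogeneous subset is a clique of some $G_j$ with $j\neq 1$, and I will exploit this structure inside the $S_i$ to find a stable set $Q$ of $H$ with $|Q|=q-1$.

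\textbf{Step 3 (induction on $q$).} For $x\in V$, the graph $H\setminus N_H[x]$ has stability number less than $q-1$. So by induction each set $V\setminus N_H[x]$ can be covered with bound $c(a,b,q-1)$. The intended argument is: since a vertex $u\in U$ is $H$-adjacent to only boundedly many vertices of each $S_i$, if $Q$ can be chosen among vertices to which $u$ is nonadjacent then $u$ is confined to a region of smaller stability number. Concretely, I would cover $U$ by the boundedly many sets $U\setminus N_H[x]$ for $x$ ranging over a bounded-size set $X\subseteq\bigcup_i S_i$, chosen so that every $u\in U$ is non-adjacent in $H$ to some $x\in X$. Because $u$ has fewer than $ab$ $H$-neighbours in $S_i$, any $X\subseteq S_i$ with $|X|\geq ab$ works. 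Applying induction to each of the (at most $ab$) sets $U\setminus N_H[x]$ with $x\in X$ and taking unions gives $\alpha(G_i[V_i])\leq ab\cdot c(a,b,q-1)+N+(a-1)\binom{N}{a}$, since $\alpha$ is subadditive over unions of vertex sets.

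\textbf{Main obstacle.} The step I expect to be hardest is making Step~3 rigorous. One has to verify that taking $X\subseteq S_i$ with $|X|=ab$ really suffices, which reduces $\alpha$ of the relevant piece from $<q$ to $<q-1$ for \emph{every} vertex in it. The delicate case is vertices of the $S_i$ themselves, and vertices $u$ whose non-neighbour $x$ lies in the part already assigned in Step~1. If this simple choice fails, I would first try choosing $S_i$ and $X$ jointly via \Cref{thm:productramsey}, so that the $H$-adjacency pattern between $X$ and the light vertices is homogeneous.
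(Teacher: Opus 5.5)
\textbf{There is a genuine gap.} Your Step~1 is correct, and the induction on $q$ through the sets $V\setminus N_H[x]$ in Step~3 is sound in itself, but the bridge between them fails.

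In Step~2 you pass from ``$u$ has fewer than $a$ $G_i$-neighbours in $S_i$ for every $i$'' to ``$u$ has fewer than $ab$ $H$-neighbours in $S_i$''. This does not follow. Lightness controls adjacency to $S_i$ only in the graph $G_i$ in which $S_i$ is stable. A light $u$ may be adjacent in some $G_j$, $j\neq i$, to all of $S_i$.

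Step~1 cannot move such a $u$ into $V_j$, because $S_i$ need not be stable in $G_j$. In fact your own Ramsey refinement shows that large subsets of $S_i$ contain large cliques of some $G_j$ with $j\neq i$. That is exactly the situation where $K_{a,a}$-freeness of $G_j$ says nothing.

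Concretely, let $b=2\leq a$ and $V=P\cup R\cup U$ with $P,R,U$ large. Let $G_1$ be a clique on $R\cup U$ with $P$ isolated. Let $G_2$ be a clique on $P$ with $U$ complete to $P$ and no other edges. Both graphs are $K_{a,a}$-free and $\alpha(G_1\cup G_2)=2$.

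For every choice of $S_1,S_2$, at most $N+1$ vertices of $U$ lie in $S_1\cup S_2$. Every other $u\in U$ has at most one $G_i$-neighbour in $S_i$ for $i=1,2$, so it is light. Yet $u$ is adjacent in $H$ to every other vertex of $V$. So it lies in no set $V\setminus N_H[x]$ at all, and neither Step~1 nor Step~3 ever covers it.

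Here $U$ is a $G_1$-clique, so the theorem is not in danger. But you may put an arbitrary instance on $U$, keeping $U$ complete to $R$ in $G_1$ and to $P$ in $G_2$, without creating an induced $K_{a,a}$; then $\alpha(H)=\max\{2,\alpha(H[U])\}$. With $S_1\subseteq P$ and $S_2\subseteq R$, all of $U$ is light and $H$-complete to $S_1\cup S_2$. So your scheme leaves an instance on $U$ as hard as the one you started with.

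The same issue already breaks your base case $q=2$. There $H$ is complete, every $V\setminus N_H[x]$ is empty, and only heavy vertices get covered.

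\textbf{What is missing} is a way to handle these ``cross'' vertices, which are $H$-adjacent to a $G_i$-stable set only through the other graphs. This is the heart of the paper's proof.

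The paper first settles $b=2$ (\Cref{lem:twographs}). It then reduces general $b$ to this case by merging $G_2\cup\cdots\cup G_b$ into one graph, which is still $K_{\gamma,\gamma}$-free by \Cref{thm:productramsey}.

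For $b=2$, set $M_1(v)=N_{G_2}(v)\setminus N_{G_1}(v)$ and $M_2(v)=N_{G_1}(v)\setminus N_{G_2}(v)$. A $G_i$-stable $a$-set $A$ splits a set $X_i$ into three parts:
\begin{enumerate}
\item the common $G_i$-neighbourhood of $A$ (the analogue of your Step~1);
\item the $H$-non-neighbours of vertices of $A$ (your Step~3);
\item $\bigcup_{v\in A}(X_i\cap M_i(v))$, which is precisely the part your argument drops.
\end{enumerate}
The third part is controlled by a hierarchy of thresholds $\beta_n$. If every $v$ has $M_1(v)$ or $M_2(v)$ covered at level $\beta_n$, then $V$ is covered at level $\beta_{n+1}$. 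Every $v$ has this property at level $\beta_a$. Otherwise one can grow a chain $v_1,\ldots,v_a$ of pairwise $G_1$-nonadjacent vertices for which $\bigcap_k M_2(v_k)$ is still not covered at level $\beta_1$. But $\bigcap_k M_2(v_k)$ is $G_1$-complete to $\{v_1,\ldots,v_a\}$, contradicting $K_{a,a}$-freeness of $G_1$.

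Your proposal has nothing playing the role of this chain argument. The Ramsey-homogenized $S_1$ and the stable set $Q$ of size $q-1$ announced in Step~2 are not tied to any step that would cover the cross vertices.
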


The case $b=2$ may be proved with an explicit bound:

\begin{lemma}\label{lem:twographs}
Let $a,q\in \mathbb N$. Let $G_1,G_2$ be $K_{a,a}$-free graphs with $V(G_1)=V(G_2)=V$ such that $\alpha(G_1\cup G_2)<q$. Then there exist $V_1,V_2\subseteq V$ with $V_1\cup V_2=V$ such that for every $i\in \{1,2\}$, we have
$\alpha(G_i[V_i])<\left(2a\right)^{a(q-1)}$.
\end{lemma}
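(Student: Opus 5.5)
We argue by induction on $q$ and prove the following bound, with $N_q=(2a)^{a(q-1)}$. If $G_1,G_2$ are $K_{a,a}$-free on $V$ and $\alpha(G_1\cup G_2)<q$, then $V$ is covered by $V_1\cup V_2$ with $\alpha(G_i[V_i])<N_q$. For $q=1$ we have $V=\varnothing$, and $N_1=1$ works trivially. For the inductive step, assume the statement for $q-1$. If $\alpha(G_1)<N_q$, take $V_1=V$ and $V_2=\varnothing$, and similarly for $G_2$. So we may fix a stable set $A$ of $G_1$ with $|A|= N_q$, and we will use it as a "probe" to split $V$.

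The key step uses $K_{a,a}$-freeness of $G_1$ against the stable set $A$. Call a vertex $v$ \emph{$A$-heavy} if it has at least $|A|/(2a)$ neighbours in $A$ in $G_1$. Let $H$ be the set of $A$-heavy vertices. We would like to show that $\alpha(G_1[H])$ is small, so that $H$ can go into $V_1$. The plan is a pigeonhole argument. If $H$ contained a $G_1$-stable set $B$ that is large compared to $a$ and $2a$, then some $a$ members of $B$ would share $a$ common $G_1$-neighbours in $A$. Together these $2a$ vertices induce $K_{a,a}$ in $G_1$, since both sides are stable. This contradiction bounds $\alpha(G_1[H])$ by a function of $a$ alone.

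Now consider the light vertices, $L=V\setminus H$. Each $v\in L$ is $G_1$-nonadjacent to all but a $1/(2a)$ fraction of $A$. The idea is to restrict to the part of $A$ that $v$ misses in $G_1$ and then apply the symmetric argument to $G_2$. This step is repeated: each time we pass to a subset of $A$ that is smaller by a factor of at most $2a$. After $a$ rounds the surviving vertices together with a point of the surviving subset of $A$ give one more vertex in a stable set of $G_1\cup G_2$. At that point we reach a set $V'$ on which $\alpha(G_1\cup G_2)<q-1$, and we apply induction to $(G_1[V'],G_2[V'])$. The resulting pieces are placed into $V_1,V_2$ together with the heavy sets found along the way. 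This accounts for the exponent: a factor $(2a)^a$ is lost per unit decrease of $q$, giving $N_q=(2a)^a N_{q-1}$.

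The main obstacle is making the light vertices actually reduce $\alpha(G_1\cup G_2)$. A light vertex can still be $G_2$-adjacent to much of $A$. We therefore need to organize the rounds as a descending chain $A=A_0\supseteq A_1\supseteq\cdots$, with $|A_{k+1}|\ge |A_k|/(2a)$, alternating the roles of $G_1$ and $G_2$. Heavy vertices for $G_2$ must be controlled via a stable set of $G_2$ inside $A_k$. Such a set exists because $\alpha(G_2[A_k])$ cannot be small: $A_k$ is $G_1$-stable and $\alpha(G_1\cup G_2)<q$ forces $G_2[A_k]$ to be dense. Making this alternation consistent, with bookkeeping that keeps every vertex covered by exactly one of the bounded-$\alpha$ sets or by the inductive remainder, is where I expect most of the work. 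The first thing I would try is a single potential argument that processes vertices greedily. In that argument each vertex is either heavy with respect to the current $A_k$ in $G_1$ or in $G_2$ (assigned to $V_1$ or $V_2$ accordingly), or it shrinks the probe.
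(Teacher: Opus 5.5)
There is a genuine gap, and it sits exactly at the step you defer. Your heavy-vertex count is fine: a K\H{o}v\'ari--S\'os--Tur\'an-type double count works, and $A\cap H=\varnothing$. The handling of light vertices, however, does not go through.

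First, your justification for the alternation is backwards. Every $A_k\subseteq A$ is stable in $G_1$, so $(G_1\cup G_2)[A_k]=G_2[A_k]$, and therefore $\alpha(G_2[A_k])<q$ however large $A_k$ is. That $G_2[A_k]$ is dense means precisely that $A_k$ contains \emph{no} large $G_2$-stable set. So the $G_2$-probe cannot be taken from inside the $G_1$-probe, and the chain $A_0\supseteq A_1\supseteq\cdots$ with alternating roles does not exist.

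Second, a probe of size $N_q$ is the wrong scale for the induction. A light vertex $v$ enters an inductive remainder $V\setminus N_{G_1\cup G_2}[u]$ only through some $u\in A$ that it misses in both graphs. Such a $u$ depends on $v$, and may not exist at all when $v$ is $G_2$-complete to $A\setminus N_{G_1}(v)$. Nothing in your sketch produces a single point missed by all surviving vertices, and invoking induction separately for each $u$ used costs a factor up to $|A|$, not $(2a)^a$. Your count $N_q=(2a)^aN_{q-1}$ tracks the shrinking of the probe, which is not the quantity that must be bounded. Nothing explains why the process stops after $a$ rounds either. The vertices that are $G_1$-nonadjacent but $G_2$-adjacent to probe vertices are exactly the ones your plan cannot control.

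The paper supplies the missing ideas. It sets $M_1(v)=N_{G_2}(v)\setminus N_{G_1}(v)$ and $M_2(v)=N_{G_1}(v)\setminus N_{G_2}(v)$. It calls $X$ $\beta$-covered if $X=X_1\cup X_2$ with $\alpha(G_i[X_i])<\beta$, and it uses probes of size only $a$.

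Suppose that for every $v\in X_0$ one of $X_1\cap M_1(v)$ or $X_2\cap M_2(v)$ is $\beta_n$-covered, but $X_0$ is not $\beta_{n+1}$-covered. Then some $i\in\{1,2\}$ gives a $G_i$-stable $a$-set $A$ of vertices $v$ for which $X_i\cap M_i(v)$ is covered, and $X_i$ splits as $B\cup C\cup D$:
\begin{itemize}
\item $B=\bigcup_{v\in A}(X_i\cap M_i(v))$ is covered by assumption;
\item $C=\bigcup_{v\in A}(X_i\setminus N_{G_1\cup G_2}(v))$ is covered by induction on $q$, at the cost of only a factor $a$;
\item $D$, the common $G_i$-neighbours of $A$, satisfies $\alpha(G_i[D])<a$ by $K_{a,a}$-freeness.
\end{itemize}

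The number of rounds is controlled by chains $v_1,\dots,v_\ell$ with $v_j\in M_1(v_i)$ for $i<j$. Such a chain is $G_1$-stable and $G_1$-complete to $\bigcap_i M_2(v_i)$, so once $\ell=a$, $K_{a,a}$-freeness of $G_1$ forces that intersection to have $G_1$-independence number below $a$. This caps the chain length, which gives that every $v$ has $M_1(v)$ or $M_2(v)$ $\beta_a$-covered, and then that $V$ is $\beta_{a+1}$-covered.

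Your proposal lacks both of these ingredients: small probes, so the inductive remainders can be unioned cheaply, and this $M_1/M_2$ chain.
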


\begin{proof}

The proof is by induction on $q$ (for fixed $a$). If $q=1$, then $V=\varnothing$ and we may choose one of $V_1=V_2=V$. Assume that $q\geq 2$. For $\beta\in \mathbb N$, we say that $X\subseteq V$ is \textit{$\beta$-covered} if there exist $X_1,X_2\subseteq X$ with $X_1\cup X_2=X$ such that $\alpha(G_1[X_1])<  \beta$ and $\alpha(G_2[X_2])< \beta$. Our goal is therefore to show that $V$ is $(2a)^{a(q-1)}$-covered.

Define the sequence $(\beta_n: n\in \mathbb N)$ recursively as follows: Let $\beta_1=\left(2a\right)^{a(q-2)}$, and for every $n\geq 2$, let
$$\beta_n=a(\beta_{n-1}+\beta_1).$$

For each $v\in V$, let 
$M_1(v)=N_{G_2}(v)\setminus N_{G_1}(v)$ and let $ M_2(v)=N_{G_1}(v)\setminus N_{G_2}(v)$.

\sta{\label{st:bigsets} Let $X_0,X_1,X_2$ be subsets of $V$ that are not necessarily distinct. Let $n\in \mathbb N$ such that for every $v\in X_0$, at least one of $X_1\cap M_1(v)$ or $X_2\cap M_2(v)$ is $\beta_n$-covered. Then at least one of $X_0,X_1$ or $X_2$ is $\beta_{n+1}$-covered.}

Suppose that $X_0$ is not $\beta_{n+1}$-covered. For each $i\in \{1,2\}$, let $Y_i$ be the set of all vertices $v\in X_{0}$ for which $X_i\cap M_i(v)$ is $\beta_n$-covered. Then $Y_{1}\cup Y_2=X_0$. Since $X_0$ is not $\beta_{n+1}$-covered, it follows that for some $i\in \{1,2\}$, we have $\alpha(G_{i}[Y_i])\geq \beta_{n+1}>a$. Choose a stable set $A$ in $G_i$ with $|A|=a$ such that $A\subseteq Y_i$. Let 
$$B=\bigcup_{v\in A} (X_i\cap M_{i}(v)),\quad  
C=\bigcup_{v\in A} \left(X_i\setminus N_{G_1\cup G_2}(v)\right),\quad D=X_i\cap\left(\bigcap_{v\in A} N_{G_i}(v)\right).$$
Observe that $X_i=B\cup C\cup D$. Moreover,

\begin{itemize}[itemsep=1mm,leftmargin=8mm]
\item Since $X_i\cap M_i(v)$ is $\beta_n$-covered for every $v\in A\subseteq Y_i$, and since $|A|=a$, it follows that $B$ is $(a(\beta_n-1)+1)$-covered.
\item Since every vertex $v\in A$ is anticomplete to $X_i\setminus N_{G_1\cup G_2}[v]$ in $G_1\cup G_2$, it follows that $\alpha((G_1\cup G_2)[X_i\setminus N_{G_1\cup G_2}[v]])<\alpha(G_1\cup G_2)\leq q-1$; thus, by the inductive hypothesis, $X_i\setminus N_{G_1\cup G_2}[v]$ is $\beta_1$-covered, and so $X_i\setminus N_{G_1\cup G_2}(v)$ is $(\beta_1+1)$-covered. Since $|A|=a$, we deduce that $C$ is $(a\beta_1+1)$-covered.

\item Since $G_i$ is $K_{a,a}$-free and $A$ is complete to $D$ in $G_i$, it follows that $\alpha(G_i[D])<a$; in particular, $D$ is $a$-covered.
\end{itemize}
But now $X_i=B\cup C\cup D$ is $(a(\beta_n-1)+a\beta_1+a)$-covered, as required. This proves \eqref{st:bigsets}.
\medskip

Let $\ell,m\in \mathbb N$ with $\ell\leq m$. An \textit{$m$-chain of length $\ell$} is an $\ell$-tuple $(v_1,\ldots, v_{\ell})$ of vertices in $V$ such that:

\begin{itemize}[itemsep=1mm,leftmargin=8mm]
\item $v_j\in M_1(v_i)$ for all $(i,j)\in \{1,\ldots, \ell\}\yl\{1,\ldots, \ell\}$; and
\item $M_1(v_1)\cap \cdots \cap M_1(v_{\ell})$ and $M_2(v_1)\cap \cdots \cap M_2(v_{\ell})$ are not $\beta_{m-\ell+1}$-covered. 
\end{itemize}

\sta{\label{st:nolongchain} There is no $a$-chain of length $a$.}

Suppose not. Let $(v_1,\ldots, v_a)$ be an $a$-chain of length $a$, let $A=\{v_1,\ldots, v_a\}$ and let $Z=M_2(v_1)\cap \cdots \cap M_2(v_{a})$. By the first bullet in the definition, $A$ is a stable set in $G_1$ with $|A|=a$, and by the second bullet, $Z$ is not $\beta_1$-covered. Since $M_2(v_i)\subseteq N_{G_1}(v_i)$ for all $i\in \{1,\ldots, a\}$, it follows that $A$ is complete to $Z$ in $G_1$. Since $G_1$ is $K_{a,a}$-free, it follows that $\alpha(G_1[Z])<a\leq \beta_1$. But now $Z$ is $\beta_1$-covered, a contradiction. This proves \eqref{st:nolongchain}.
\medskip

\sta{\label{st:longchain} For every $v\in V$, at least one of $M_1(v)$ or $M_2(v)$ is $\beta_a$-covered.}

Suppose not. Choose $v\in V$ such that $M_1(v)$ and $M_2(v)$ are not $\beta_a$-covered. Then $v$ is an $a$-chain of length $1$. Let $I$ be the set of all $\ell\in \{1,\ldots, a\}$ for which there is no $a$-chain of length $\ell$. Then $1\notin I$. Also, by \eqref{st:nolongchain}, we have $I\neq \varnothing$. Let $\ell_0=\min I$; then $\ell_0\geq 2$. By the minimality of $\ell_0$, there is an $a$-chain $(v_1,\ldots, v_{\ell_0-1})$ of length $\ell_0-1$. Let $Z_1=M_1(v_1)\cap \cdots \cap M_1(v_{\ell_0-1})$ and let $Z_2=M_2(v_1)\cap \cdots \cap M_2(v_{\ell_0-1})$. By the second bullet in the definition, $Z_1$ and $Z_2$ are not $\beta_{a-\ell_0+2}$-covered. Therefore, by \eqref{st:bigsets} (applied to $X_0=X_1=Z_1$ and $X_2=Z_2$), there is a vertex $v_{\ell_0}\in Z_1$ for which $Z_1\cap M_1(v_{\ell_0})$ and $Z_2\cap M_2(v_{\ell_0})$ are not $\beta_{a-\ell_0+1}$-covered. But now $(v_1,\ldots, v_{\ell_0-1},v_{\ell_0})$ is an $a$-chain of length $\ell_0$, contradicting the choice of $\ell_0\in I$. This proves \eqref{st:longchain}.

\sta{\label{st:zetabound} $\beta_n\leq (2a)^{n-1}\beta_1$ for all $n\in \mathbb N$.}

This is clear for $n=1$. Inductively, if $\beta_{n-1}\leq (2a)^{n-2}\beta_1$ for some $n\geq 2$, then
$$\beta_{n}=a(\beta_{n-1}+\beta_1)\leq  a\left((2a)^{n-2}\beta_1+\beta_1\right)\leq a\left(2(2a)^{n-2}\beta_1\right)=(2a)^{n-1}\beta_1.$$
This proves \eqref{st:zetabound}.
\medskip
  
Now, by \eqref{st:longchain}, for every $v\in V$, at least one of $M_1(v)$ or $M_2(v)$ is $\beta_a$-covered. Consequently, it follows from \eqref{st:bigsets} (applied to $X_0=X_1=X_2=V$) that $V$ is $\beta_{a+1}$-covered. Also, by \eqref{st:zetabound},
\[\beta_{a+1}\leq (2a)^a\beta_1=(2a)^a\left(2a\right)^{a(q-2)}=\left(2a\right)^{a(q-1)}.\]
This completes the proof of \Cref{lem:twographs}.
\end{proof}

We combine \Cref{lem:twographs} with Theorems~\ref{thm:multiramsey} and \ref{thm:productramsey} to prove \Cref{thm:manygraphs}:

\begin{proof}[Proof of \Cref{thm:manygraphs}] Throughout, let $a\in \mathbb N$ be fixed. We begin by constructing a function $f:\mathbb N\times \mathbb N\to \mathbb N$. First,  for every $m\in \mathbb N$, let 
$$\gamma_m=c_{\ref{thm:productramsey}}\left(m,2,a\right).$$
Observe that $\gamma_m\geq a$ for all $m\in \mathbb N$. Define $f$ recursively: For every $n\in \mathbb N$, let $f(1,n)=n$.  For every $(m,n)\in \mathbb N\times \mathbb N$ with $m\geq 2$ and $f(m-1,n')$ defined for all $n'\in \mathbb N$, let
    $$f(m,n)=f\left(m-1,\left(2\gamma_{m-1}\right)^{\gamma_{m-1}(n-1)}\right).$$
This concludes the definition of $f$. We deduce that:

\sta{\label{st:increaseonn} Let $m\in \mathbb N$. Then $f(m,n)\geq n$ for all $n\in \mathbb N$.}

This is clear for $m=1$. Inductively, if $m\geq 2$ such that $f(m-1,n')\geq n'$ for all $n'\in \mathbb N$, then we have
$$f(m,n)=f\left(m-1,\left(2\gamma_{m-1}\right)^{\gamma_{m-1}(n-1)}\right)\geq \left(2\gamma_{m-1}\right)^{\gamma_{m-1}(n-1)}\geq 2^{n-1}\geq n$$
for all $n\in \mathbb N$. This proves \eqref{st:increaseonn}.
\medskip

Now, with $f$ defined as above, we will prove by induction on $b\in \mathbb N$ that for every $q\in \mathbb N$,
$$c_{\ref{thm:manygraphs}}=c_{\ref{thm:manygraphs}}(a,b,q)=f(b,q)$$
satisfies the theorem. If $b=1$, then $\alpha(G_1)<q=f(1,q)=c_{\ref{thm:manygraphs}}$, and we may choose $V_1=V$. From here on, assume that $b\geq 2$. Let 
$G'_2=G_2\cup\cdots \cup G_b$. We claim that:

\sta{\label{st:G2free} $G_1$ and $G'_2$ are $K_{\gamma_{b-1},\gamma_{b-1}}$-free.}

This is immediate for $G_1$ because $\gamma_{b-1}\geq a$. Suppose for a contradiction that there exist $\gamma_{b-1}$-subsets $A_1,A_2$ of $V$ that are stable and complete to each other in $G'_2$. In particular, $A_1,A_2$ are stable in $G_i$ for every $i\in \{2,\ldots, b\}$, and for each $(x_1,x_2)\in A_1\times A_2$, there exists $i_{(x_1,x_2)}\in \{2,\ldots, b\}$ such that $x_1,x_2$ are adjacent in $G_{i_{(x_1,x_2)}}$. Let $\Phi:A_1\times A_2\to \{2,\ldots, b\}$ be the function with the rule 
$$\Phi(x_1,x_2)=i_{(x_1,x_2)}.$$
Since $|A_1|=|A_2|=\gamma_{b-1}=c_{\ref{thm:productramsey}}(b-1, 2,a)$, by \Cref{thm:productramsey} applied to $\Phi$, there exist $i_0\in \{2,\ldots, b\}$, $B_1\subseteq A_1$, and $B_2\subseteq A_2$, such that $|B_1|=|B_2|=a$ and for every $(x_1,x_2)\in B_1\times  B_2$, we have $\Phi((x_1,x_2))=i_{(x_1,x_2)}=i_0$. But now
$G_{i_0}[B_1\cup B_2]$ is isomorphic to $K_{a,a}$, a contradiction. This proves \eqref{st:G2free}.
\medskip

By \eqref{st:G2free} and since $\alpha(G_1\cup G'_2)=\alpha(G_1\cup G_2\cup \cdots\cup G_b)<q$, we can apply \Cref{lem:twographs} to $G_1$ and $G'_2$, and obtain $V_1,V'_2\subseteq V$ with $V_1\cup V'_2=V$ such that 
$$\alpha(G_1[V_1]), \alpha(G'_2[V'_2])< \left(2\gamma_{b-1}\right)^{\gamma_{b-1}(q-1)}.$$
Since $b\geq 2$, from \eqref{st:increaseonn}, it follows that
$$\alpha(G_1[V_1])< \left(2\gamma_{b-1}\right)^{\gamma_{b-1}(q-1)}\leq f\left(b-1,\left(2\gamma_{b-1}\right)^{\gamma_{b-1}(q-1)}\right)=f(b,q)=c_{\ref{thm:manygraphs}}.$$
Also, since
$\alpha\left(G_2[V'_2]\cup \cdots\cup G_b[V'_2]\right)=\alpha(G'_2[V'_2])<\left(2\gamma_{b-1}\right)^{\gamma_{b-1}(q-1)}$, by the inductive hypothesis, there exist $V_2,\ldots, V_b\subseteq V'_2$ with $V_2\cup \cdots \cup V_b=V'_2$ such that for every $i\in \{2,\ldots, b\}$, we have $$\alpha(G_i[V_i])\leq f(b-1,\left(2\gamma_{b-1}\right)^{\gamma_{b-1}(q-1)})\leq f(b,q)=c_{\ref{thm:manygraphs}}.$$
Hence, we obtain $V_1,V_2,\ldots, V_b\subseteq V$ with $V_1\cup V_2\cup \cdots \cup V_b=V_1\cup V'_2=V$ such that $\alpha(G_i[V_i])\leq c_{\ref{thm:manygraphs}}$ for every $i\in \{1,\ldots, b\}$. This completes the proof of \Cref{thm:manygraphs}.
\end{proof}

\section{Short paths across a $b$-system}\label{sec:short}

Here, we prove \Cref{thm:mainbnddsep}. In fact, we prove the following stronger result. A \textit{multistar} is a graph with each component isomorphic to $K_{1,d}$ for some $d\geq 0$. In particular, every multiclaw is a multistar; thus, the following implies \Cref{thm:mainbnddsep}:

\begin{restatable}{theorem}{bettermainbnddsep}\label{thm:bettermainbnddsep}
    For all $a,b\in \mathbb N$ and every subdivided multistar $F$, there are constants $c_{\ref{thm:bettermainbnddsep}}=c_{\ref{thm:bettermainbnddsep}}(a,b,F)\in \mathbb N$ and $d_{\ref{thm:bettermainbnddsep}}=d_{\ref{thm:bettermainbnddsep}}(a,b,F)\in \mathbb N$ such that every $K_{a,a}$-free $F$-free graph is $(b,c_{\ref{thm:bettermainbnddsep}})$-loosely $d_{\ref{thm:bettermainbnddsep}}$-$\alpha$-separable.
\end{restatable}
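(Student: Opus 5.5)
Fix $a,b$ and a subdivided multistar $F$. Let $F$ have $k$ components, each with at most $\Delta$ leaves and every leg of length at most $\ell$. Let $G$ be $K_{a,a}$-free and $F$-free, and let $\mathcal{S}$ be a $b$-system in $G$. I will assume the second outcome fails: for every pair $S_1,S_2\in\mathcal{S}$, every hitting set for $\mathcal{P}_G(S_1,S_2)$ avoiding $S_1\cup S_2$ has $\alpha>d$. From this I will produce a hitting set $X$ for $\mathcal{S}$ with $\alpha(G[X])\le c$.

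The plan is to split the argument according to whether $\mathcal{S}$ contains a large anticomplete subsystem. The key tool is Theorem~\ref{thm:manygraphs}, with $b$ auxiliary graphs $G_1,\dots,G_b$. Fix an injection $f_S:S\to\{1,\dots,b\}$ for each $S\in\mathcal{S}$. Let $V$ be the union of the sets in $\mathcal{S}$. For a vertex $v\in V$, declare $v$ to be \emph{of type $i$} in each set $S$ containing it with $f_S(v)=i$. The goal is a cover $V=V_1\cup\dots\cup V_b$ such that each $G[V_i]$ has bounded $\alpha$. Then any choice of $X\subseteq \bigcup V_i$ that meets every $S$ has $\alpha(G[X])\le bc'$.

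The difficulty is that the $S$ may overlap arbitrarily. So I will first reduce to a maximal anticomplete subfamily $\mathcal{S}'=\{S_1,\dots,S_m\}\subseteq\mathcal{S}$. Every $S\in\mathcal{S}$ meets or touches some $S_j$, so $X=\bigcup_j N[S_j]$-type sets hit $\mathcal{S}$. It therefore suffices to show two things:
\begin{enumerate}
\item[(i)] $m$ is bounded, or more robustly, the union $\bigcup_j S_j$ has bounded $\alpha$ after a covering argument;
\item[(ii)] the neighbourhoods $N(S_j)$ have bounded $\alpha$ where needed.
\end{enumerate}
Part (ii) is handled by $K_{a,a}$-freeness together with Ramsey-type arguments. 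Concretely, I would take $X$ to consist of one representative per $S_j$ plus $N[\cdot]$ restricted cleverly. More carefully, I would replace "neighbourhood" by defining graphs $G_i$ on index set $\{1,\dots,m\}$. Put $jj'\in E(G_i)$ if the $i$-th vertices are linked in a way that forces their union to be "dense". Then I apply Theorem~\ref{thm:manygraphs} provided $\alpha(G_1\cup\dots\cup G_b)$ is bounded and each $G_i$ is $K_{a',a'}$-free.

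The main step, and the expected obstacle, is proving that a large anticomplete subsystem is impossible. Suppose $S_1,\dots,S_N$ are pairwise anticomplete with $N$ huge. By the failure of the second outcome and a Menger-type argument for $\alpha$-bounded separators, each pair $(S_i,S_j)$ carries many $(S_i,S_j)$-paths. I expect to argue that one can find $\ge d'$ paths whose interiors are pairwise anticomplete: greedily take a path, remove a bounded-$\alpha$ neighbourhood, and repeat. Here the bound on path length comes from $F$-freeness, since long induced paths contain a leg; short paths have bounded size, giving an $r$-system with $r\approx\ell$. Then:
\begin{itemize}
\item Lemma~\ref{lem:endsramsey} fixes common ends $x_i,y_j$.
\item Lemma~\ref{lem:ramseysmallsystems} makes the chosen paths between different pairs, and the paths versus the third sets $S_i$, pairwise anticomplete.
\end{itemize}
With these clean paths I build an induced subdivided multistar. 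Each star centre is some $x_i$, with legs being paths to distinct $y_j$ truncated to the right length. This yields $F$ as an induced subgraph, a contradiction. So $N$ is bounded.

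Given bounded $N$, the cover is obtained as follows. Each $S\in\mathcal{S}$ touches some $S_j$ through a vertex of some type $i$. The vertices of type $i$ adjacent to a fixed point are controlled by $K_{a,a}$-freeness, and Theorem~\ref{thm:manygraphs} splits the union so that each $G_i$-part has bounded $\alpha$. The hardest point remains the Menger step: converting "no $\alpha$-small separator" into many short, mutually anticomplete paths. That step is where the $F$-freeness and $K_{a,a}$-freeness must interact, and I would attack it first.
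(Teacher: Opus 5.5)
Your outline has the paper's overall shape: no cheap hitting set gives many pairwise anticomplete sets, and many cleaned-up paths between them, via Lemmas~\ref{lem:endsramsey} and~\ref{lem:ramseysmallsystems}, give an induced $F$. However, the two steps that carry the weight are missing or false.

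First, $F$-freeness does not bound the length of $(S_i,S_j)$-paths. If a component of $F$ has three leaves, arbitrarily long induced paths are $F$-free, and they are also $K_{2,2}$-free. The only available extraction tool works for sets of bounded size, so you must truncate paths, and truncation only helps for \emph{long} paths. Paths of length at most $r=|V(F)|$ (e.g.\ of length two) cannot supply legs, and your plan never disposes of them. The missing ingredient is the separate treatment of short paths in Lemma~\ref{lem:hit-vs-far}:
\begin{enumerate}
\item One first passes to $\rho$ sets $S_1,\ldots,S_\rho$ such that, for every pair, $\mathcal{P}^r_G(S_i,S_j)$ has a hitting set $Y_{i,j}\subseteq V(G)\setminus(S_i\cup S_j)$ with $\alpha<\sigma$. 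Otherwise, many anticomplete short paths between many pairs assemble into an induced proper short subdivision of a large complete bipartite graph, which contains $F$.
\item Only then does the failure of the second outcome, with $d=\sigma+\tau$, force the \emph{long} $(S_i,S_j)$-paths to admit no hitting set of $\alpha<\tau$.
\item Hence the bounded-size sets $V(Q_P)\setminus\{u_P\}$ have no cheap hitting set. Here $Q_P$ is the initial length-$r$ segment of $P$ and $u_P\in S_i$ is its end. Removing $u_P$ is needed so that the hitting set avoids $S_i\cup S_j$, and so that segments sharing a common start can be anticomplete.
\item After Lemma~\ref{lem:endsramsey}, these segments all start at a common $x_i$ and are aimed at a single target index that is not itself a centre. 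They are the legs of the $r$ spiders.
\end{enumerate}
Your ``legs to distinct $y_j$ truncated to the right length'' presupposes the long paths whose existence only this short/long dichotomy provides.

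Second, your mechanism for turning ``no cheap hitting set'' into ``many pairwise anticomplete sets'' does not work.
\begin{itemize}
\item The greedy step (take a path, delete a bounded-$\alpha$ neighbourhood, repeat) and your part~(ii) both assume that neighbourhoods have bounded $\alpha$. This fails in $K_{a,a}$-free graphs: the centre of the $K_{2,2}$-free star $K_{1,n}$ is a counterexample. There is also no Menger-type theorem for $\alpha$-bounded separators to appeal to.
\item $\bigcup_j N[S_j]$ over a maximal anticomplete subfamily can have unbounded $\alpha$ even when $m=1$.
\item A cover of $\bigcup\mathcal{S}$ itself by sets $V_i$ with $\alpha(G[V_i])$ bounded would make $\bigcup\mathcal{S}$ a cheap hitting set, which is false in general.
\end{itemize}
What is needed, both for the hitting set and for extracting the anticomplete segments above, is Lemma~\ref{lem:hit-vs-anti}. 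Restrict to the sets of one size $m$ and write $S_j=\{x_{1,j},\ldots,x_{m,j}\}$. Define $G_i$ on the \emph{index set} by $v_jv_{j'}\in E(G_i)$ if and only if $x_{i,j}=x_{i,j'}$ or $x_{i,j}x_{i,j'}\in E(G)$; the equality clause matters because the sets overlap. Each $G_i$ is $K_{a+1,a+1}$-free, and one of two things happens:
\begin{itemize}
\item $\alpha(G_1\cup\cdots\cup G_m)$ is below a Ramsey threshold. Then Theorem~\ref{thm:manygraphs} splits the indices into $J_1\cup\cdots\cup J_m$, and $\bigcup_i\{x_{i,j}:j\in J_i\}$ is a hitting set of bounded $\alpha$.
\item There is a large index set on which every coordinate gives distinct, pairwise nonadjacent vertices. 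Ramsey's theorem together with $K_{a,a}$-freeness then makes the corresponding sets pairwise anticomplete.
\end{itemize}
Your unspecified edge rule (``linked in a way that forces their union to be dense'') and the unexplained assumption that $\alpha(G_1\cup\cdots\cup G_b)$ is bounded skip exactly this dichotomy.
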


We need two lemmas:

\begin{lemma}\label{lem:hit-vs-anti}
    For all $a,b,q\in \mathbb N$, there is a constant $c_{\ref{lem:hit-vs-anti}}=c_{\ref{lem:hit-vs-anti}}(a,b,q)\in \mathbb N$ such that for every $K_{a,a}$-free graph $G$ and every $b$-system $\mca{S}$ in $G$, either there is a hitting set $X\subseteq V(G)$ for $\mca{S}$ with $\alpha(G[X])< c_{\ref{lem:hit-vs-anti}}$, or there is an anticomplete $b$-system $\mca{Q}\subseteq\mca{S}$ in $G$ with $|\mca{Q}|\geq q$.
\end{lemma}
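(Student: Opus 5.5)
The plan is to reduce \Cref{lem:hit-vs-anti} to \Cref{thm:manygraphs}, applied to auxiliary graphs whose vertices are the members of $\mca{S}$ rather than vertices of $G$. For each $S\in \mca{S}$, fix an injection $f_S:S\to\{1,\ldots,b\}$, as in the proofs of \Cref{lem:endsramsey} and \Cref{lem:ramseysmallsystems}.

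\emph{Step 1: the auxiliary graphs.} Let $H$ be the graph on vertex set $\mca{S}$ in which $S\neq S'$ are adjacent if and only if $S,S'$ are not anticomplete in $G$, that is, they intersect or some edge of $G$ joins them. If the second outcome fails, then $\alpha(H)<q$. For $k\in\{1,\ldots,b\}$, let $H_k$ be the graph on $\mca{S}$ in which $S\neq S'$ are adjacent if $f_S^{-1}(k)$ lies in or has a neighbour in $S'$, or if $f_{S'}^{-1}(k)$ lies in or has a neighbour in $S$. Then $H_1\cup\cdots\cup H_b=H$, so $\alpha(H_1\cup\cdots\cup H_b)<q$.

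\emph{Step 2: each $H_k$ is $K_{a',a'}$-free for some $a'=a'(a,b)$.} Suppose $\mca{A},\mca{A}'$ are stable in $H_k$ and complete to each other in $H_k$. I would apply \Cref{thm:productramsey} to $\mca{A}\times\mca{A}'$, colouring each pair by three pieces of data: which of the two directions witnesses adjacency, which index $l$ of the target set is hit, and whether the hit is by equality or by adjacency. The outcome is $a$-subsets on which, say, every $x_S=f_S^{-1}(k)$ with $S\in\mca{A}$ is equal or adjacent to $y_{S'}=f_{S'}^{-1}(l)$ for every $S'\in\mca{A}'$.
\begin{itemize}
\item Because $\mca{A}'$ is stable in $H_k$, the sets in $\mca{A}'$ are pairwise anticomplete there. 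So their $y$'s are pairwise distinct and nonadjacent, and the same holds for the $x$'s.
\item Equality with two distinct $y$'s is impossible, so the monochromatic colour must be adjacency.
\item Then $\{x_S\}\cup\{y_{S'}\}$ induces $K_{a,a}$ in $G$, a contradiction.
\end{itemize}
The point I need to check carefully is that stability in $H_k$ really gives anticompleteness of the relevant vertices. It may be cleaner to define $H_k$ via the pair relation ``$f_S^{-1}(k)$ touches $f_{S'}^{-1}(l)$ for some $l$'' and use $b^2$ graphs $H_{k,l}$; I would switch to that if needed.

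\emph{Step 3: covering and building $X$.} By \Cref{thm:manygraphs} with parameters $(a',b,q)$ (or $(a',b^2,q)$ for the variant), we get $\mca{S}=V_1\cup\cdots\cup V_b$ with $\alpha(H_k[V_k])\leq c$. Put
\[X=\{f_S^{-1}(k): S\in V_k,\ k\in\{1,\ldots,b\}\},\]
which is clearly a hitting set for $\mca{S}$. Now take a large stable set $Z$ of $G[X]$. By pigeonhole, many of its vertices have the form $x_S=f_S^{-1}(k)$ for one fixed $k$, with the corresponding $S$ in $V_k$. By \Cref{thm:multiramsey} applied to pairs, these $S$ are either pairwise nonadjacent in $H_k$, which is impossible beyond $c$, or form a large clique in $H_k$. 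In the clique case, a further Ramsey step on the ordered pairs $i<j$ fixes the direction and the hit index $l$: say $x_{S_i}$ is adjacent to $y_j=f_{S_j}^{-1}(l)$ for all $i<j$. Equality is excluded once there are at least three indices, since $Z$ consists of distinct vertices.

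\emph{Main obstacle.} The difficulty is this final clique case. The $y_j$ need not be stable, since adjacency of the $S_j$ in $H_k$ may come exactly from edges among the $y$'s. So we cannot immediately take the first $a$ of the $x$'s and the last $a$ of the $y$'s as a $K_{a,a}$. My first attempt would be a further application of \Cref{thm:multiramsey}, colouring pairs $i<j$ by adjacency and equality between $y_i$ and $y_j$, and between $y_i$ and $x_j$.
\begin{itemize}
\item If the $y$'s form a large stable set, the first $a$ of the $x$'s and the last $a$ of the $y$'s give $K_{a,a}$, a contradiction.
\item If the $y$'s form a large clique, I would instead refine the choice of representative: choose $X$ so that each $S$ contributes the vertex on which its $H$-adjacencies are concentrated, for example by iterating over $k$ with a lexicographic or minimal choice of index.
\end{itemize}
If the clique case cannot be excluded this way, I would fall back to defining the auxiliary graphs on ordered pairs of indices, i.e.\ $H_{k,l}$, so that stability in $H_{k,l}[V_{k,l}]$ directly controls the stable sets of the chosen representatives.
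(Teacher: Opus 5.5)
\textbf{There is a genuine gap, and it starts in Step 2, before the obstacle you flag.}

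Stability of $\mathcal{A}'$ in $H_k$ only says that each $k$-th representative $f_{S'}^{-1}(k)$ avoids the other sets of $\mathcal{A}'$. It says nothing about the $l$-th representatives for $l\neq k$. So your first bullet is false, and $H_k$ need not be $K_{a',a'}$-free for any $a'$. Here is a counterexample with $b=2$, $k=1$:
\begin{itemize}
\item Let $G$ consist of a clique $\{w_1,\ldots,w_n\}$ complete to a stable set $\{p_1,\ldots,p_n\}$, plus isolated vertices $z_i,u_j$. This is a split graph plus isolated vertices, hence $K_{2,2}$-free.
\item Take $S_i=\{p_i,z_i\}$ with $f_{S_i}(p_i)=1$, and $S'_j=\{u_j,w_j\}$ with $f_{S'_j}(u_j)=1$, $f_{S'_j}(w_j)=2$.
\item The families $\{S_i\}$ and $\{S'_j\}$ are stable in $H_1$ and complete to each other there, so $H_1$ contains an induced $K_{n,n}$ for every $n$.
\end{itemize}
Your fallback graphs $H_{k,l}$ with $k\neq l$ fail the same way. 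Put the $k$-th representatives of one side and the $l$-th representatives of the other side into a common clique, and make every other vertex isolated. The same defect causes your unresolved clique case in Step 3. With your $H_k$, $\alpha(G[X])$ is not controlled by $\alpha(H_k[V_k])$, and refining the choice of representatives does not repair this.

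\textbf{The missing idea.} Only the diagonal auxiliary graphs behave well. Fix a class $\mathcal{S}_m$ of sets of equal size $m$ (you need this or a substitute anyway, since $f_S^{-1}(k)$ is undefined when $k\notin f_S(S)$). Make $S,S'$ adjacent in $G_i$ when their $i$-th elements are equal or adjacent in $G$. These graphs are $K_{a+1,a+1}$-free. Moreover, $\alpha(G_i[V_i])$ equals $\alpha$ of the set of $i$-th representatives, so \Cref{thm:manygraphs} gives the hitting set directly.

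The price is that $G_1\cup\cdots\cup G_m\neq H$. So the absence of $q$ pairwise anticomplete sets no longer bounds $\alpha(G_1\cup\cdots\cup G_m)$, and your plan has no argument for this half. The paper uses the dichotomy on $\alpha(G_1\cup\cdots\cup G_m)$ with a Ramsey threshold $\zeta_m$ in place of $q$:
\begin{itemize}
\item If the union has a stable set of size $\zeta_m$, this gives an index set $J$ on which every coordinate family $\{x_{i,j}:j\in J\}$ consists of distinct, pairwise nonadjacent vertices.
\item Apply \Cref{thm:multiramsey} to pairs $j<j'$ in $J$, coloured by the set of $(i,i')$ with $x_{i,j}=x_{i',j'}$ and the set with $x_{i,j}x_{i',j'}\in E(G)$.
\item In the resulting homogeneous set, equalities are ruled out using three indices.
\item Adjacencies are ruled out because two stable coordinate families complete to each other would induce $K_{a,a}$.
\end{itemize}
The homogeneous set is then the anticomplete subfamily of size at least $q$.
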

\begin{proof}
   Throughout, let $a,b,q\in \mathbb N$ be fixed. For every $m\in \mathbb N$, let $$\zeta_{m}=c_{\ref{thm:multiramsey}}\left(2^{2m^2},2,\max\{2a, q, 3\}\right),\quad \eta_m=c_{\ref{thm:manygraphs}}(a+1,m,\zeta_m).$$
Let
    $$c_{\ref{lem:hit-vs-anti}}=c_{\ref{lem:hit-vs-anti}}(a,b,q)=\sum_{m=1}^b m\eta_{m}.$$

Let $G$ be a $K_{a,a}$-free graph and let $\mca{S}$ be a $b$-system in $G$ that admits no hitting set $X\subseteq V(G)$ with $\alpha(G[X])\leq \sum_{m=1}^b m\eta_{m}$. We prove that there is an anticomplete $b$-system $\mca{Q}\subseteq\mca{S}$ in $G$ with $|\mca{Q}|\geq q$.
    
    For each $m\in \{1,\ldots, b\}$, let $\mca{S}_m=\{S\in \mca{S}: |S|=m\}$. Since $\mca{S}=\mca{S}_1\cup\cdots\cup \mca{S}_b$, it follows that for some $m\in \{1,\ldots, b\}$, there is no hitting set $X\subseteq V(G)$ for $\mca{S}_m$ with $\alpha(G[X])< m\eta_m$; in particular, $\mca{S}_m\neq \varnothing$ because  $X=\varnothing$ is not a hitting set for $\mca{S}_m$.

    Let $|\mca{S}_m|=n\geq 1$; say, $\mca{S}_m=\{S_1,\ldots,S_{n}\}$. For each $j\in \{1,\ldots, n\}$, let 
    $$S_j=\{x_{i,j}:i\in \{1,\ldots, m\}\}.$$
    
Let $V=\{v_1,\ldots, v_n\}$ be a set. For each $i\in \{1,\ldots, m\}$, let $G_i$ be the graph with $V(G_i)=V$ such that for all distinct $j,j'\in \{1,\ldots, n\}$, we have $v_{j}v_{j'}\in E(G_i)$ if and only if either $x_{i,j}=x_{i,j'}$ or $x_{i,j}x_{i,j'}\in E(G)$. We claim that:

\sta{\label{st:blowup} $G_1,\ldots, G_m$ are all $K_{a+1,a+1}$-free.}

Suppose for a contradiction that for some $i\in \{1,\ldots, m\}$, there exist pairwise distinct $j_1,\ldots, j_{a+1},j'_1,\ldots, j'_{a+1}\in \{1,\ldots, n\}$
such that $\{v_{j_1},\ldots, v_{j_{a+1}}\}$ and $\{v_{j'_1},\ldots, v_{j'_{a+1}}\}$ are stable and complete to each other in $G_i$. Since $G$ is $K_{a,a}$-free, it follows that there exist $k,l\in \{1,\ldots, a+1\}$ for which $x_{i,j_k}=x_{i,j'_l}$. Choose $k'\in \{1,\ldots, a+1\}$ with $k\neq k'$ (this is possible because $a+1\geq 2$). Now, on one hand, $x_{i,j_k}$ and $x_{i,j_{k'}}$ are distinct and nonadjacent in $G$ since $v_{j_k}v_{j_{k'}}\notin E(G_i)$, and on the other hand, $x_{i,j_k}$ and $x_{i,j_{k'}}$ are the same or adjacent in $G$ since $x_{i,j_k}=x_{i,j'_l}$ and $v_{j_{k'}}v_{j'_{l}}\in E(G_i)$, a contradiction. This proves \eqref{st:blowup}.

\sta{\label{st:commonstableset} There exists $J\subseteq \{1,\ldots, n\}$ with $|J|=\zeta_m$ such that for every $i\in \{1,\ldots, m\}$, the vertices $(x_{i,j}:j\in J)$ are pairwise distinct and nonadjacent in $G$.}

Suppose for a contradiction that no such $J$ exists. Then $\alpha(G_1\cup\cdots\cup G_m)< \zeta_m$. By \eqref{st:blowup} and since $\eta_m=c_{\ref{thm:manygraphs}}(a+1,m,\zeta_m)$, we can apply \Cref{thm:manygraphs} to $G_1,\ldots, G_m$ and write $J_1\cup \cdots \cup J_m=\{1,\ldots, n\}$ such that $\alpha(G_i[\{v_j:j\in J_i\}])<\eta_m$ for every $i\in \{1,\ldots, m\}$.
Note that $\alpha(G[\{x_{i,j}:j\in J_i\}])=\alpha(G_i[\{v_j:j\in J_i\}])$ for every $i\in \{1,\ldots, m\}$. Moreover, for every $j\in \{1,\ldots,n\}$, there exists $i\in \{1,\ldots, m\}$ such that $j\in J_i$, which in turn implies that $x_{i,j}\in S_j\cap \{x_{i,j}:j\in J_i\}$. But now $X=\bigcup_{i=1}^m \{x_{i,j}:j\in J_i\}$ is a hitting set for $\mca{S}_m$ with $\alpha(G[X])<m\eta_m$, a contradiction. This proves \eqref{st:commonstableset}.
\medskip

Henceforth, let $J$ be as given by \eqref{st:commonstableset}. Let 
$$I=\{1,\ldots, m\}\times \{1,\ldots, m\}.$$
Let $\phi_1,\phi_2:  J\yl J\to 2^{I}$ be functions with the rules 
$$\phi_1(j,j')= \{(i,i')\in I: x_{i,j}=x_{i',j'}\}\subseteq I;$$
$$\phi_2(j,j')=\{(i,i')\in I: x_{i,j}x_{i',j'}\in E(G)\}\subseteq I.$$ 
Let $\Phi:\binom{J}{2}\to 2^{I}\times 2^I$ be the function that maps each $2$-subset $\{j,j'\}$ of $J$ with $j<j'$ to
$$\Phi(\{j,j'\})= \left(\phi_1(j,j'),\phi_2(j,j')\right).$$

Since $|2^I\times 2^I|=2^{2m^2}$ and $|J|=\zeta_m=c_{\ref{thm:multiramsey}}(2^{2m^2},2,\max\{2a, q, 3\})$, by \Cref{thm:multiramsey} applied to $\Phi$, there exist $I_1,I_2\subseteq I$ and $K\subseteq J$ with $|K|=\max\{2a, q, 3\}$ such that for every $(j,j')\in K\yl K$, we have $\phi_1(j,j')=I_1$ and $\phi_2(j,j')=I_2$. We further claim that:

\sta{\label{st:I1empty}$I_1=\varnothing$.}

Suppose not; say, $(i,i')\in I_1$. Since $|K|\geq 3$, there exist $j,j',j''\in K$ with $j<j'<j''$. Since $(i,i')\in I_1=\phi_1(j,j'')=\phi_1(j',j'')$, it follows that $x_{i,j}=x_{i',j''}$ and $x_{i,j'}=x_{i',j''}$. But now $x_{i,j}=x_{i,j'}$, a contradiction to \eqref{st:commonstableset}. This proves \eqref{st:I1empty}.

\sta{\label{st:I2empty}$I_2=\varnothing$.}

Suppose not; say, $(i,i')\in I_2$. Since $|K|\geq 2a$, there exist $A,A'\subseteq K$ with $|A|=|A'|=a$ such that $\max A<\min A'$. By \eqref{st:commonstableset}, $Y=\{x_{i,j}:j\in A\}$ and $Y'=\{x_{i,j'}:j'\in A'\}$ are stable sets in $G$ with $|Y|=|Y'|=a$. Moreover, for each $j\in A$ and every $j'\in A'$, since $(i,i')\in I_2=\phi_2(j,j')$, it follows that $x_{i,j}x_{i',j'}\in E(G)$ (and in particular $x_{i,j}\neq x_{i',j'}$). But now $G[Y\cup Y']$ is isomorphic to $K_{a,a}$, a contradiction. This proves \eqref{st:I2empty}.
\medskip

From \eqref{st:I1empty} and \eqref{st:I2empty}, it follows that $\mca{Q}=\{S_j:j\in K\}$ is an anticomplete $b$-system in $G$ where $\mca{Q}\subseteq \mca{S}_m\subseteq \mca{S}$ and $|\mca{Q}|=|K|\geq q$. This completes the proof of \Cref{lem:hit-vs-anti}. 
\end{proof}

Next, we prove a stronger version of \Cref{lem:hit-vs-anti} (note that \Cref{lem:hit-vs-anti} is equivalent to \Cref{lem:hit-vs-far} in the case $r=1$). The proof involves applications of the technical lemmas~\ref{lem:endsramsey} and \ref{lem:ramseysmallsystems} from \Cref{sec:disentangle}:

\begin{lemma}\label{lem:hit-vs-far}
    For all $a,b,q,r\in \mathbb N$, there are constants $c_{\ref{lem:hit-vs-far}}=c_{\ref{lem:hit-vs-far}}(a,b,q,r)\in \mathbb N$ and $d_{\ref{lem:hit-vs-far}}=d_{\ref{lem:hit-vs-far}}(a,b,q,r)\in \mathbb N$ such that for every graph $G$ with no induced subgraph isomorphic to any pure $(\leq r-1)$-subdivision of $K_{a,a}$, and every $b$-system $\mca{S}$ in $G$, either
    \begin{enumerate}[{\rm (a)}, leftmargin=8mm, itemsep=1mm]
        \item\label{lem:hit-vs-far_a}  there is a hitting set $X\subseteq V(G)$ for $\mca{S}$ with $\alpha(G[X])< c_{\ref{lem:hit-vs-far}}$; or
       \item\label{lem:hit-vs-far_b} there is a $b$-system $\mca{Q}\subseteq\mca{S}$ in $G$ with $|\mca{Q}|\geq q$ such that for all distinct $S_1,S_2\in \mca{Q}$, $\mca{P}^r_G(S_1,S_2)$ has a hitting set $Y\subseteq V(G)\setminus (S_1\cup S_2)$ with $\alpha(G[Y])< d_{\ref{lem:hit-vs-far}}$.
        \end{enumerate}
\end{lemma}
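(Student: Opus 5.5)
Proof plan: induction on $r$, with the case $r=1$ given by \Cref{lem:hit-vs-anti}. A pure $(\leq 0)$-subdivision of $K_{a,a}$ is $K_{a,a}$ itself. For anticomplete $S_1,S_2$, every path in $\mca{P}^1_G(S_1,S_2)$ is an edge between them or has length $0$, so there are none and $Y=\varnothing$ works. Thus \ref{lem:hit-vs-far_b} holds with any $d\geq 1$.

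For $r\geq 2$, the hypothesis for $r$ implies the one for $r-1$. Fix a large $q'$ and apply the inductive statement with $q'$ in place of $q$. If it returns \ref{lem:hit-vs-far_a}, we are done. Otherwise we get $\mca{Q}'\subseteq \mca{S}$ with $|\mca{Q}'|\geq q'$, where pairs are separated at distance $\leq r-1$ by sets $Y$ with $\alpha< d'$. We also first apply \Cref{lem:hit-vs-anti} to pass to an anticomplete subsystem. Then, for each pair $(i,j)$, we either find a hitting set for $\mca{P}^r_G(S_i,S_j)$ avoiding $S_i\cup S_j$ with bounded $\alpha$, or we do not.

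Colour pairs by which case occurs and apply \Cref{thm:multiramsey}. If some large homogeneous set lies in the "separable" colour, we get \ref{lem:hit-vs-far_b}. Otherwise every pair in a large set $I$ is inseparable at length $r$. The main step is to extract many disjoint, pairwise anticomplete paths of length exactly $r$ between $S_i$ and $S_j$ (paths of length $<r$ are already hit by the small separator from the previous stage). To do this, apply \Cref{lem:hit-vs-anti} again to the $(r+1)$-system of interiors of such paths, with the vertex sets of the $(S_i,S_j)$-paths of length $\leq r$ that avoid the level-$(r-1)$ separator $Y_{i,j}$. If these interiors admitted a hitting set of small $\alpha$, then together with $Y_{i,j}$ it would separate $S_i$ from $S_j$ at length $r$, a contradiction. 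Hence there is a large anticomplete family $\mca{R}_{i,j}$ of such paths (as interior sets).

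Next apply \Cref{lem:endsramsey} to make all paths in $\mca{R}_{i,j}$ run from a fixed $x_i\in S_i$ to a fixed $y_j\in S_j$. Then apply \Cref{lem:ramseysmallsystems} to the interiors, so that paths for different pairs are mutually anticomplete and anticomplete to the other $S_k$.

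Now choose $i_1<\dots<i_a<j_1<\dots<j_a$ in $I$ and one path for each $(i_k,j_l)$. Their union with the vertices $x_{i_k}$ and $y_{j_l}$ gives an induced subdivision of $K_{a,a}$ with branch vertices $x_{i_k}$ and $y_{j_l}$, each edge subdivided into a path of length $r$. This is a pure $(\leq r-1)$-subdivision, contradicting the hypothesis. One complication is that we need $x$ on one side and $y$ on the other to be the correct ends, and that the $x_{i}$ are pairwise nonadjacent; both follow because the $S_i$ are anticomplete. If $x_i=y_i$ the construction is even simpler.

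The main obstacle is the inseparability-to-many-anticomplete-paths step. There, "inseparable by bounded $\alpha$" must be converted into "many pairwise anticomplete short paths", and the constants and degenerate cases need care. In particular, a path of length $r$ whose interior meets $S_i\cup S_j$ is not an $(S_i,S_j)$-path, so this case does not arise. We also need the recovered paths to be induced and internally disjoint from other $S$'s, which \Cref{lem:ramseysmallsystems}\ref{lem:ramseysmallsystems_b} handles. Finally, uniform length $r$ may need an extra pigeonhole on length to keep the subdivision pure.
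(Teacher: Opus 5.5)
Your argument is correct. Apart from the induction on $r$, it is the paper's proof. The shared steps are: \Cref{lem:hit-vs-anti} to get a large anticomplete subfamily; Ramsey on pairs according to whether $\mca{P}^r_G(S_i,S_j)$ has a small-$\alpha$ hitting set avoiding $S_i\cup S_j$; a second application of \Cref{lem:hit-vs-anti} to turn inseparability into many paths with pairwise anticomplete interiors; then \Cref{lem:endsramsey}, \Cref{lem:ramseysmallsystems}, and the assembly of a subdivided $K_{a,a}$.

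The paper does not need the induction. ``Pure'' only means that either no edge or every edge is subdivided, and path lengths may differ, each at most $r$. Since the $S_i$ are anticomplete, every path in $\mca{P}^r_G(S_i,S_j)$ already has length in $\{2,\ldots,r\}$. So the paper applies \Cref{lem:hit-vs-anti} directly to $\{P^*:P\in \mca{P}^r_G(S_i,S_j)\}$, and the resulting proper $(\leq r-1)$-subdivision of $K_{a,a}$ may have unequal path lengths. Your worry about an ``extra pigeonhole on length'' is therefore unnecessary for the lemma as stated.

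What your detour buys is uniform length $r$. Your proof in fact only uses that $G$ has no induced $s$-subdivision of $K_{a,a}$ for $s\in\{0,\ldots,r-1\}$, which is a slightly weaker hypothesis. The cost is iterated constants: $d_r=d_{r-1}+\tau$ for a constant $\tau$ from \Cref{lem:hit-vs-anti}, and $q'$ must be Ramsey-large.

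You leave the order of the inductive call and the extra application of \Cref{lem:hit-vs-anti} vague, but this is harmless. Outcome (b) at level $r-1\geq 1$ already forces the members of $\mca{Q}'$ to be pairwise anticomplete: a common vertex or an edge between two members would give a path of length at most $1$ inside $S_1\cup S_2$, which no $Y$ avoiding $S_1\cup S_2$ can hit.
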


\begin{proof}
Let
$$\theta=c_{\ref{lem:ramseysmallsystems}}(a,2,r,2a,1);\quad \kappa=c_{\ref{lem:endsramsey}}(b,\theta);\quad \lambda=\max\{q,\kappa\};\quad \mu=c_{\ref{thm:multiramsey}}(2,2,\lambda).$$
Let 
$$\nu=d_{\ref{lem:ramseysmallsystems}}(a,2,r,2a,1).$$
We will prove that 
$$c_{\ref{lem:hit-vs-far}}=c_{\ref{lem:hit-vs-far}}(a,b,q,r)=c_{\ref{lem:hit-vs-anti}}(a,b,\mu)\quad \text{ and }\quad d_{\ref{lem:hit-vs-far}}=d_{\ref{lem:hit-vs-far}}(a,b,q,r)=c_{\ref{lem:hit-vs-anti}}(a,r,b^2\nu)$$
satisfy the theorem.

Let $G$ be a graph with no induced subgraph isomorphic to any pure $(\leq r-1)$-subdivision of $K_{a,a}$, and let $\mca{S}$ be a $b$-system in $G$. Suppose for a contradiction that none of \ref{lem:hit-vs-far}\ref{lem:hit-vs-far_a} and \ref{lem:hit-vs-far}\ref{lem:hit-vs-far_b} holds. By \Cref{lem:hit-vs-anti} applied to $\mca{S}$, since there is no hitting set $X\subseteq V(G)$ for $\mca{S}$ with $\alpha(G[X])< c_{\ref{lem:hit-vs-far}}=c_{\ref{lem:hit-vs-anti}}(a,b,\mu)$, it follows that there is an anticomplete $b$-system $\mca{M}\subseteq \mca{S}$ with $|\mca{M}|=\mu$. Since $\mu=c_{\ref{thm:multiramsey}}(2,2,\lambda)$, it follows from \Cref{thm:multiramsey} that there exist $\lambda$ sets $S_1,\ldots, S_{\lambda}\in \mca{M}$ such that either
\begin{itemize}[leftmargin=8mm, itemsep=1mm]
    \item for all distinct $i,j\in \{1,\ldots, \lambda\}$, $\mca{P}_G^r(S_i,S_j)$ admits a hitting set $Y\subseteq V(G)\setminus (S_i\cup S_j)$ with $\alpha(G[Y])< d_{\ref{lem:hit-vs-far}}$; or
    \item for all distinct $i,j\in \{1,\ldots, \lambda\}$, $\mca{P}_G^r(S_i,S_j)$ admits no hitting set $Y\subseteq V(G)\setminus (S_i\cup S_j)$ with $\alpha(G[Y])< d_{\ref{lem:hit-vs-far}}$.
\end{itemize}
In the former case, \ref{lem:hit-vs-far}\ref{lem:hit-vs-far_b} holds because $\lambda\geq q$, a contradiction. So we may assume that the latter case occurs. By the choice of $d_{\ref{lem:hit-vs-far}}=c_{\ref{lem:hit-vs-anti}}(a,r,b^2\nu)$, it follows that for every $(i,j)\in \{1,\ldots, \lambda\}\yl \{1,\ldots, \lambda\}$, there exists $\mca{P}_{i,j}\subseteq \mca{P}_G^r(S_i,S_j)$ with $|\mca{P}_{i,j}|=b^2\nu$ such that the interiors of the paths in $\mca{P}_{i,j}$ are pairwise anticomplete in $G$. Moreover, since $\lambda\geq \kappa=c_{\ref{lem:endsramsey}}(b,\theta)$, it follows from \Cref{lem:endsramsey} that there exist $I\subseteq \{1,\ldots, \lambda\}$ with $|I|=\theta$, as well as $\mca{R}_{i,j}\subseteq \mca{P}_{i,j}$ with $|\mca{R}_{i,j}|=\nu$ for each $(i,j)\in I\yl I$, and vertices $x_i,y_i\in S_i$ for each $i\in I$,  such that $\mca{R}_{i,j}\subseteq \mca{P}_G(x_{i},y_{j})$ for all $(i,j)\in I\yl I$.

Now, $\{\{x_i,y_i\}:i\in I\}$ is an anticomplete $2$-system in $G$ of cardinality $\theta$. Moreover, $\{P^*:P\in \mca{R}_{i,j}\}$ is an anticomplete $r$-system in $G$ of cardinality $\nu$ for every $(i,j)\in I\yl I$. By \Cref{lem:ramseysmallsystems} and the choice of $\theta,\nu$, there exist $J\subseteq I$ with $|J|=2a$, as well as $P_{i,j}\in \mca{R}_{i,j}$ for each $(i,j)\in J\yl J$, such that for all distinct $(i,j),(i',j')\in J\yl J$, the sets $P_{i,j},P_{i',j'}$ are anticomplete in $G$, and for all distinct $i,i',i''\in I$ with $i'<i''$, the sets $\{x_i,y_i\}$ and $P^*_{i',i''}$ are anticomplete in $G$. Since $|J|=2a$, we may choose $J_1,J_2\subseteq J$ with $|J_1|=|J_2|=a$ such that $\max J_1<\min J_2$. But now the graph 
$$\bigcup_{(i,j)\in J_1\times J_2}P_{i,j}$$
is an induced subgraph of $G$ isomorphic to a proper $(\leq r-1)$-subdivision of $K_{a,a}$, a contradiction. This completes the proof of \Cref{lem:hit-vs-far}.
\end{proof}

We are now ready to prove \Cref{thm:bettermainbnddsep}, which we restate:

\bettermainbnddsep*

\begin{proof} Let $r=|V(F)|$. Let
$$\xi=c_{\ref{lem:ramseysmallsystems}}(a,2,r,r+1,r);\quad \xi'=d_{\ref{lem:ramseysmallsystems}}(a,2,r,r+1,r).$$
Let
$$\rho=c_{\ref{lem:endsramsey}}(b,\xi);\quad \sigma=d_{\ref{lem:hit-vs-far}}(a,b,\rho,r);\quad \tau=c_{\ref{lem:hit-vs-anti}}(a,r,b^2\xi').$$
We will prove that 
$$c_{\ref{thm:bettermainbnddsep}}=c_{\ref{thm:bettermainbnddsep}}(a,b,F)=c_{\ref{lem:hit-vs-far}}(a,b,\rho,r)\quad \text{ and }\quad d_{\ref{thm:bettermainbnddsep}}=d_{\ref{thm:bettermainbnddsep}}(a,b,F)=\sigma+\tau$$
satisfy the theorem. 

Suppose for a contradiction that for some $K_{a,a}$-free $F$-free graph $G$, there exists a $b$-system $\mca{S}$ in $G$ such that
\begin{itemize}
    \item $\mca{S}$ has no hitting set $X\subseteq V(G)$ with $\alpha(G[X])\leq  c_{\ref{thm:bettermainbnddsep}}=c_{\ref{lem:hit-vs-far}}(a,b,\rho,r)$; and
    \item for all distinct $S_1,S_2\in \mca{S}$, $\mca{P}_G(S_1,S_2)$ has no hitting set $Y\subseteq V(G)\setminus (S_1\cup S_2)$ with $\alpha(G[Y])\leq  d_{\ref{thm:bettermainbnddsep}}=\sigma+\tau$.
\end{itemize}

By the first bullet, it follows from \Cref{lem:hit-vs-far} applied to $\mca{S}$ that:

\sta{\label{st:ahappens} There exist $S_1,\ldots, S_{\rho}\in\mca{S}$ such that for all distinct $i,j\in \{1,\ldots, \rho\}$, $\mca{P}^{r}_G(S_i,S_j)$ has a hitting set $Y_{i,j}\subseteq V(G)\setminus (S_i\cup S_j)$ with $\alpha(G[Y_{i,j}])< d_{\ref{lem:hit-vs-far}}(a,b,\rho,r)=\sigma$.}

Let $S_1,\ldots, S_{\rho}\in\mca{S}$ be as given by \eqref{st:ahappens}. For each $(i,j)\in \{1,\ldots, \rho\}\yl \{1,\ldots, \rho\}$ and every path $P\in \mca{P}_G(S_i,S_j)\setminus \mca{P}^{r}_G(S_i,S_j)$, let $u_P$ be the end of $P$ in $S_i$, let $v_P$ be the end of $P$ in  $S_j$, and let $Q_P$ be the path of length $r$ in $P$ with $u_P\in V(Q_P)$. It follows that $u_P$ is an end of $P$, and $v_P\notin V(Q_P)$. Let $\mca{Q}_{i,j}=\{Q_P:P\in \mca{P}_G(S_i,S_j)\setminus \mca{P}^{r}_G(S_i,S_j)\}$.

For each $(i,j)\in \{1,\ldots, \rho\}\yl \{1,\ldots, \rho\}$, it follows from \eqref{st:ahappens} and the second bullet above that there is no hitting set $Z\subseteq V(G)\setminus (S_i\cup S_j)$ for $\mca{Q}_{i,j}$ with $\alpha(G[Z])< \tau$; thus, by \Cref{lem:hit-vs-anti} and the choice of $\tau=c_{\ref{lem:hit-vs-anti}}(a,r,b^2\xi')$, there exists $\mca{P}_{i,j}\subseteq \mca{P}_G(S_i,S_j)\setminus \mca{P}^{r}_G(S_i,S_j)$ with $|\mca{P}_{i,j}|\geq b^2\xi'$ for which $(V(Q_P)\setminus \{u_P\}:P\in \mca{P}_{i,j})$ are pairwise anticomplete in $G$.

On the other hand, since $\rho=c_{\ref{lem:endsramsey}}(b,\xi)$, it follows from \Cref{lem:endsramsey} that there exist $I\subseteq \{1,\ldots, \rho\}$ with $|I|=\xi$, as well as $\mca{R}_{i,j}\subseteq \mca{P}_{i,j}$ with $|\mca{R}_{i,j}|=\xi'$ for each $(i,j)\in I\yl I$, and vertices $x_i,y_i\in S_i$ for each $i\in I$,  such that $\mca{R}_{i,j}\subseteq \mca{P}_G(x_{i},y_{j})$ for all $(i,j)\in I\yl I$; in particular, $u_{P}=x_i$ for every $P\in \mca{R}_{i,j}$.

Now, $\{\{x_i,y_i\}:i\in I\}$ is an anticomplete $2$-system in $G$ of cardinality $\xi$. Moreover, $\{V(Q_P)\setminus \{x_i\}:P\in \mca{R}_{i,j}\}$ is an anticomplete $r$-system in $G$ of cardinality $\xi'$ for every $(i,j)\in I\yl I$. By \Cref{lem:ramseysmallsystems} and the choice of $\xi,\xi'$, it follows that there exist $J\subseteq I$ with $|J|=r+1$, as well as $\mca{T}_{i,j}\subseteq \mca{R}_{i,j}$ with $|\mca{T}_{i,j}|=r$ for each $(i,j)\in J\yl J$, such that for all distinct $(i,j),(i',j')\in J\yl J$, each $P\in \mca{T}_{i,j}$ and every $P'\in \mca{T}_{i',j'}$, the sets $V(Q_P)\setminus \{x_i\}, V(Q_{P'})\setminus \{x_{i'}\}$ are anticomplete in $G$, and for all distinct $i,i',i''\in I$ with $i'<i''$ and every $P\in \mca{T}_{i',i''}$, the sets $\{x_i,y_i\}$ and $V(Q_P)\setminus \{x_{i'}\}$ are anticomplete in $G$. But now for $j=\max J$, the graph 
$$H=\bigcup_{i\in J\setminus \{j\}}\bigcup_{P\in \mca{T}_{i,j}}Q_P$$
is an induced subgraph of $G$ with exactly $r$ components, each of which is isomorphic to the $(r-1)$-subdivision of $K_{1,r}$; in turn, $H$ has an induced subgraph isomorphic to $F$, a contradiction. This completes the proof of \Cref{thm:bettermainbnddsep}.
\end{proof}

\section{Separators and loose separability}\label{sec:separators}

In this last section, we prove \Cref{thm:ta-vs-sep}. We begin with a few definitions and a result from the literature.

Let $G$ be a graph. By a \textit{normal function on $G$} we mean a function $\we:V(G)\to [0,1]$ with $\sum_{v\in V(G)}\we(v)=1$. Let $\we$ be a normal function on $G$. For every $X\subseteq V(G)$, we write $\we(X)=\sum_{v\in X}\we(v)$. A \textit{$\we$-balanced separator in $G$} is a set $Z\subseteq V(G)$ of vertices in $G$ such that $\we(V(C))\leq 1/2$ for every component $C$ of $G\setminus Z$. 

In earlier joint work \cite{ti2} with Chudnovsky and Lokshtanov, we showed that:

\begin{lemma}[Chudnovsky, Hajebi, Lokshtanov, Spirkl; Lemma 7.1 in \cite{TAII}]\label{lem:sep-vs-ta}
    Let $G$ be a graph and let $d\in \mathbb N$ such that for every normal function $\we:V(G)\to [0,1]$, there is a $\we$-balanced separator $Z$ in $G$ with $\alpha(G[Z])\leq d$. Then $\ta(G)\leq 5d$.
\end{lemma}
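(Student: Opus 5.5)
The plan is the standard ``balanced separators give tree decompositions'' recursion of Robertson and Seymour, with cardinality replaced by $\alpha$ throughout. I would prove the following by induction on $|V(H)|$, for induced subgraphs $H$ of $G$. For every $W\subseteq V(H)$ with $\alpha(G[W])\leq 4d$, the graph $H$ has a tree decomposition with the following two properties:
\begin{enumerate}[{\rm (i)}, leftmargin=8mm, itemsep=1mm]
\item every bag $B$ satisfies $\alpha(G[B])\leq 5d$;
\item some bag contains $W$.
\end{enumerate}
The lemma then follows by taking $H=G$ and $W=\varnothing$. The hypothesis is used on $G$ itself, but it transfers to $H$. Indeed, a normal function on $H$ extends by zero to a normal function on $G$. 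A $\we$-balanced separator $Z$ in $G$ then yields the balanced separator $Z\cap V(H)$ in $H$, because every component of $H\setminus Z$ lies inside a component of $G\setminus Z$.

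For the inductive step there are two cases.

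\emph{Small $W$.} Suppose $\alpha(G[W])<4d$, or more generally that we can afford to grow $W$. Then pick a vertex $v\in V(H)\setminus W$ and recurse with $W\cup\{v\}$, after checking that $\alpha$ increases by at most one; this case is routine.

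\emph{Large $W$.} Suppose $\alpha(G[W])=4d$. Let $S\subseteq W$ be a maximum stable set, and let $\we$ be uniform on $S$ and zero elsewhere. The hypothesis gives $Z$ with $\alpha(G[Z])\leq d$ such that every component $C$ of $H\setminus Z$ satisfies $|S\cap V(C)|\leq 2d$. The root bag is $W\cup Z$, so $\alpha(G[W\cup Z])\leq 5d$. For each component $C$ we recurse on $H[V(C)\cup N_H(V(C))]$ with the new set
\[
W_C=(W\cap V(C))\cup (N_H(V(C))\cap (W\cup Z)),
\]
and we attach the resulting decomposition below the root bag. The tree-decomposition axioms are checked exactly as in the classical proof, because $N_H(V(C))\subseteq W\cup Z$ and $W_C$ is contained in both the root bag and a bag of the child decomposition. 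The recursion also terminates: the child graph is strictly smaller unless $Z\cup W$ is disjoint from it, which can be handled by choosing the vertex $v$ as in the small case.

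The main obstacle is verifying $\alpha(G[W_C])\leq 4d$ in the large case. The balance condition only controls how many vertices of the particular stable set $S$ fall into $C$. A different stable set inside $W\cap V(C)$ could a priori be larger. I would handle this by exploiting the maximality of $S$. Let $T$ be a stable set in $W\cap V(C)$. Then $T\cup (S\setminus N[V(C)])$ is stable, because $C$ is a component of $H\setminus Z$. Maximality of $S$ therefore gives
\[
|T|\leq |S\cap N[V(C)]| \leq |S\cap V(C)|+|S\cap Z|\leq 2d+d.
\]
The vertices added from $Z$ and from $N_H(V(C))\cap W$ contribute at most $d$ further, via $Z$, together with part of $S$ already counted. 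Balancing these terms so that the total is at most $4d$, possibly by using weights $\tfrac12$ on $S$ together with the separator's $\alpha$-budget, is where the constants $4d$ and $5d$ come from. I expect this bookkeeping to be the delicate point. If the direct count overshoots, I would fall back to applying the separator hypothesis a bounded number of times, namely to a stable set of the offending component, and take the union of the separators obtained, which keeps $\alpha$ bounded by a constant multiple of $d$.
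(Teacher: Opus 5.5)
Your proof is correct and uses the standard Robertson--Seymour recursion with the maximum-stable-set trick, which is the usual way such statements are proved, including Lemma~7.1 of \cite{TAII}; the present paper only cites that lemma and gives no proof. Two loose ends need tidying. First, the bookkeeping you hedge on closes exactly as stated: $N_H(V(C))\subseteq Z$ gives $W_C\subseteq (W\cap V(C))\cup Z$, so $\alpha(G[W_C])\leq 3d+d=4d$, and neither weights $\tfrac12$ nor a fallback are needed. Second, replace your termination remark with this observation: $|S\cap Z|\leq d$ forces at least two components of $H\setminus Z$ to meet $S$, so every child graph $H[V(C)\cup N_H(V(C))]$ misses a vertex of $S$ and is a proper induced subgraph of $H$; induct on $(|V(H)|,|V(H)\setminus W|)$, with the base case $W=V(H)$ handled by a single bag.
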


For $b\in \mathbb N$, a graph $G$ is \textit{$b$-separator-dominated}  if for every normal function $\we$ on $G$, there exists $X\subseteq V(G)$ such that $N_G[X]$ is a $\we$-balanced separator in $G$. The main tool in the proof of \Cref{thm:ta-vs-sep} is the following:

\begin{lemma}\label{lem:ta-vs-sep-domin}
Let $b,c,d\in \mathbb N$ and let $G$ be a $(b,c)$-loosely $d$-$\alpha$-separable graph such that every induced subgraph of $G$ is $b$-separator-dominated. Then $\ta(G)\leq 10c+5d$. 
\end{lemma}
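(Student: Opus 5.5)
By \Cref{lem:sep-vs-ta}, it suffices to show that for every normal function $\we$ on $G$ there is a $\we$-balanced separator $Z$ in $G$ with $\alpha(G[Z])\leq 2c+d$, since $5(2c+d)=10c+5d$. I read the definition of $b$-separator-dominated as asking for $X$ with $|X|\leq b$; I will use it in that form. Fix $\we$ and suppose for a contradiction that no such $Z$ exists. The natural object is the $b$-system
$$\mca{S}=\{X\subseteq V(G):\ 1\leq |X|\leq b \text{ and } N_G[X] \text{ is a } \we\text{-balanced separator in } G\},$$
which is non-empty because $G$ itself is $b$-separator-dominated. I will apply $(b,c)$-loose $d$-$\alpha$-separability to $\mca{S}$ and refute each outcome.

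\emph{Second outcome.} Suppose there are $S_1,S_2\in\mca{S}$ and a hitting set $Y\subseteq V(G)\setminus(S_1\cup S_2)$ for $\mca{P}_G(S_1,S_2)$ with $\alpha(G[Y])\leq d$. Then no component of $G\setminus Y$ meets both $S_1$ and $S_2$. If every component of $G\setminus Y$ has weight at most $1/2$, then $Y$ is the desired separator. Otherwise there is a unique heavy component $D$, and $D$ misses, say, $S_2$. Since $N_G[S_2]$ is balanced, the weight of $D$ is broken up by $N_G(S_2)\cap V(D)$, but that set may have unbounded $\alpha$. To avoid paying for it, I will use the fact that the induced subgraph $G[V(D)\cup N_G[S_2]]$ (or $D$ itself, with $\we$ renormalised) is again $b$-separator-dominated, and feed the resulting configuration back into the argument on a smaller graph. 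This suggests proving a stronger statement by induction on $|V(G)|$: every induced subgraph $G'$ and every normal function on $G'$ admit a balanced separator with $\alpha\leq 2c+d$.

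\emph{First outcome.} Suppose there is a hitting set $H$ for $\mca{S}$ with $\alpha(G[H])\leq c$. If $H$ is itself balanced, we are done. Otherwise, let $D$ be the heavy component of $G\setminus H$, and apply $b$-separator-domination to $G[D]$ with the weight restricted to $D$ and renormalised. This gives $X'\subseteq V(D)$ with $|X'|\leq b$ and $N_D[X']$ balanced in $D$. Components of $G\setminus(H\cup N_G[X'])$ are then light. The key observation is that $X'\cap H=\varnothing$, so $X'\notin\mca{S}$. Hence $N_G[X']$ is not balanced in $G$, and its heavy component must pass through $H$. Repeating the loose-separability step inside $D$ should produce a second hitting set $H'$ with $\alpha\leq c$, or a separator $Y$ with $\alpha\leq d$. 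Then $H\cup H'$, or $H\cup Y$, together with at most one more such set, should be balanced. This accounts for the bound $2c+d$.

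The main obstacle is choosing the right induction hypothesis: the neighbourhoods $N_G(S_i)$ and $N_G(X')$ have unbounded $\alpha$, and they must never enter the final separator. I would first try the induction on induced subgraphs with the family $\mca{S}$ relativised to $D$, checking that "balanced in $D$" lifts to "balanced in $G$" after adding the already-chosen sets $H$ and $Y$. This lifting is what the hypothesis that \emph{every} induced subgraph is separator-dominated is designed to supply.
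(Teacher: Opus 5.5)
\textbf{There is a genuine gap, and it comes from your choice of $\mca{S}$.}

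\emph{First outcome.} You take $\mca{S}$ to be the sets $S$ for which $N_G[S]$ \emph{alone} is $\we$-balanced. With this choice the first outcome does not close. The set $X'$ produced by separator-domination of $G\setminus H$ only makes $N_G[X']\cup H$ balanced, so $X'\notin\mca{S}$ is no contradiction. Your proposed iteration (``repeat the loose-separability step inside $D$'', ``together with at most one more such set'') has no mechanism that stops after a bounded number of rounds, so it never delivers the bound $2c+d$.

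The missing idea is to build the $c$-budget into the system. Call $S$ \emph{dominant} if there is $X\subseteq V(G)$ with $\alpha(G[X])\le c$ such that $N_G[S]\cup X$ is $\we$-balanced. Let $\mca{S}$ be the set of dominant $S$ with $1\le |S|\le b$. Then the first outcome refutes itself in one step:
\begin{itemize}
\item Let $H$ be a hitting set with $\alpha(G[H])\le c$. It is not balanced, so $G\setminus H$ has a component of weight $>1/2$.
\item Separator-domination of $G\setminus H$, with the weight renormalised, gives $S_0\subseteq V(G)\setminus H$ with $|S_0|\le b$ such that $N_G[S_0]\cup H$ is $\we$-balanced in $G$. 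The set $S_0$ is nonempty because of that heavy component.
\item Thus $S_0\in\mca{S}$ with witness $H$, yet $S_0\cap H=\varnothing$, contradicting that $H$ is a hitting set.
\end{itemize}

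\emph{Second outcome.} Your worry that $N_G(S_i)$ has unbounded $\alpha$ is misplaced: the neighbourhoods never need to enter the separator. Take $S_1,S_2\in\mca{S}$ with witnesses $X_1,X_2$, and $Y$ as in the definition. Set $Z=X_1\cup X_2\cup Y$, so $\alpha(G[Z])\le 2c+d$.
\begin{itemize}
\item Let $C$ be a component of $G\setminus Z$ with $\we(V(C))>1/2$. It is connected and disjoint from $X_i$, so it cannot sit inside a component of $G\setminus(N_G[S_i]\cup X_i)$.
\item Hence $C$ meets both $N_G[S_1]$ and $N_G[S_2]$.
\item A shortest path from $S_1$ to $S_2$ in $G[V(C)\cup S_1\cup S_2]$ is then an $(S_1,S_2)$-path, since $S_1,S_2$ are anticomplete. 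It is disjoint from $Y$, a contradiction.
\end{itemize}
This is exactly where $2c+d$ comes from, and no induction on $|V(G)|$ is needed.

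The same argument applied to your original $\mca{S}$ shows that $Y$ by itself is already balanced. So your claim that the heavy component of $G\setminus Y$ merely misses $S_2$ and must be handled recursively overlooks the key point: it must meet both closed neighbourhoods.
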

\begin{proof}
   Suppose not. Then by \Cref{lem:sep-vs-ta}, for some normal function $\we:V(G)\to [0,1]$, there is no $\we$-balanced separator $Z$ in $G$ with $\alpha(G[Z])\leq 2c+d$. 
   
   Let us say that a subset $S$ of $V(G)$ is \textit{dominant} if there exists $X\subseteq V(G)$ with $\alpha(G[X])\leq c$ such that $N_G[S]\cup X$ is a $\we$-balanced separator in $G$. Let $\mca{S}$ be the set of all dominant sets $S\subseteq V(G)$ with $|S|\in \{1,\ldots, b\}$. Then $\mca{S}$ is a $b$-system in $G$. We further claim that:

    \sta{\label{st:smallalphahitdone} $\mca{S}$ admits no hitting set $X_0\subseteq V(G)$ with $\alpha(G[X_0])\leq c$.}

  Suppose not. Let $X_0\subseteq V(G)$ be a hitting set for $\mca{S}$ with $\alpha(G[X_0])\leq c$, and let $G_0=G\setminus X_0$. Since $\alpha(G[X_0])\leq c<2c+d$, it follows that $X_0$ is not a $\we$-balanced separator in $G$, and so there is a component $C_0$ of $G_0=G\setminus X_0$ with $\we(V(C_0))>1/2$. Define the normal function $\we_0:V(G_0)\to [0,1]$ with the rule $\we_0(v)=\we(v)/\we(V(G_0))$. Since $G_0$ (as an induced subgraph of $G$) is $b$-separator-dominated, there exists $S_0\subseteq V(G_0)$ with $|S_0|\leq b$ such that $N_{G_0}[S_0]$ is a $\we_0$-balanced separator in $G_0$; that is, $\we_0(V(C))\leq 1/2$ for every component $C$ of $G_0\setminus N_{G_0}[S_0]$. In fact, since $C_0$ is a component of $G_0$ with $\we_0(V(C_0))=\we(V(C_0))/\we(V(G_0))\geq \we(V(C_0))>1/2$, it follows that $S_0\neq \varnothing$. Moreover, since $G\setminus (N_{G}[S_0]\cup X_0)=G_0\setminus N_{G_0}[S_0]$, it follows that for every component $C$ of $G\setminus (N_{G}[S_0]\cup X_0)$, we have $\we(V(C))=\we_0(V(C))\we(V(G_0))\leq \we_0(V(C))\leq 1/2$, and so $N_{G}[S_0]\cup X_0$ is a $\we$-balanced separator in $G$. Since $\alpha(G[X_0])\leq c$, it follows that $S_0$ is dominant, and since $1\leq |S_0|\leq b$, it follows that $S_0\in \mca{S}$. But now $S_0\cap X_0\neq \varnothing$, a contradiction. This proves \eqref{st:smallalphahitdone}.
   \medskip
   
By \eqref{st:smallalphahitdone} and since $G$ is $(b,c)$-loosely $d$-$\alpha$-separable, there exist $S_1,S_2\in \mca{S}$ for which $\mca{P}_G(S_1,S_2)$ has a hitting set $Y\subseteq V(G)\setminus (S_1\cup S_2)$ with $\alpha(G[Y])\leq d$; in particular, $S_1,S_2$ are anticomplete in $G$. For each $i\in \{1,2\}$, since $S_i$ is dominant, there exists $X_i\subseteq V(G)$ with $\alpha(G[X_i])\leq c$ such that $N_G[S_i]\cup X_i$ is a $\we$-balanced separator in $G$. Let $$Z=X_1\cup X_2\cup Y.$$
    Then $\alpha(G[Z])\leq 2c+d$, and therefore $Z$ is not a $\we$-balanced separator in $G$. It follows that there is a component $C$ of $G\setminus Z$ with $\we(V(C))>1/2$. For each $i\in \{1,2\}$, since $N_G[S_i]\cup X_i$ is a $\we$-balanced separator in $G$, and since $C$ is a connected induced subgraph of $G$ with $\we(V(C))>1/2$ and $V(C)\cap X_i\subseteq V(C)\cap Z=\varnothing$, it follows that $V(C)\cap N_G[S_i]\neq \varnothing$. Specifically, since $C$ is connected and $S_1$ and $S_2$ are anticomplete in $G$, it follows that there exists $P\in \mca{P}_G(S_1,S_2)$ with $P^*\subseteq V(C)$. But now since $(S_1\cup S_2)\cap Y=\varnothing$ and $P^*\cap Y\subseteq V(C)\cap Y\subseteq V(C)\cap Z=\varnothing$, we have $V(P)\cap Y=\varnothing$, contradicting the fact that $Y$ is a hitting set for $\mca{P}_G(S_1,S_2)$. This completes the proof of \Cref{lem:ta-vs-sep-domin}.
\end{proof}

We also need the following (more precisely, a slight improvement, given by \Cref{cor:sep-domin}):

\begin{lemma}[Chudnovsky, Codsi, Lokshtanov, Milani\v{c}, Sivashankar; 1.3 in \cite{ti5}]\label{lem:sep-domin}
    For every graph $F$ isomorphic to a subdivision of $K_{1,3}$ and every wall $W$, there is a constant $b_{\ref{lem:sep-domin}}=b_{\ref{lem:sep-domin}}(F,W)$ such that every $F$-free graph with no induced subgraph isomorphic to the line graph of any subdivision of $W$ is $b_{\ref{lem:sep-domin}}$-separator-dominated.
\end{lemma}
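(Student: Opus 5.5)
The plan is to argue by contradiction. Fix $F$ and $W$, and suppose $G$ is $F$-free with no induced line graph of a subdivision of $W$. Suppose also that $\we$ is a normal function on $G$ such that $N_G[X]$ is not a $\we$-balanced separator for any $X$ with $|X|\leq b$, where $b$ is large in terms of $F$ and $W$. Then for every such $X$ there is a unique component of $G\setminus N_G[X]$ of weight more than $1/2$; call it the \emph{heavy} component $C_X$. The first step is to turn this into a rich connectivity structure. Greedily choose vertices $v_1,v_2,\ldots$ with $v_{k+1}\in C_{\{v_1,\ldots,v_k\}}$. Any two heavy components intersect, since their weights sum to more than $1$. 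Using this, I would show that for $k\leq b$ the vertices $v_1,\ldots,v_k$ are pairwise at distance at least $2$, and that every pair of them (indeed every bounded subset) is joined by induced paths through $C_{\{v_1,\ldots,v_k\}}$ that avoid the neighbourhoods of all the other chosen vertices. This gives a large set $\{v_1,\ldots,v_b\}$ that is \emph{well-linked in the induced sense} inside a connected subgraph $H$ of $G$.

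The second step is to analyse a minimal connected induced subgraph $H'\subseteq H$ that contains (neighbours of) many of the $v_i$. This is a ``connectifier'' for a large set of pairwise nonadjacent vertices. Known structure theorems for minimal connected subgraphs containing many prescribed vertices (in the spirit of Abrishami, Chudnovsky, Hajebi and Spirkl) say that, after passing to a large subset of the $v_i$, $H'$ is one of three things: a path with the chosen vertices attached along it; a subdivided star; or the line graph of a subdivided star. A long subdivided star contains a subdivision of $K_{1,3}$ with long legs, hence $F$. Assembling many such pieces, or a line graph of a subdivided star together with the heavy-component linkage, would produce the line graph of a subdivision of $W$. So both star outcomes are excluded, and $H'$ must be path-like.

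The third step is to handle the path-like case. If the heavy connectivity is essentially carried along an induced path $P$ with the $v_i$ attached in order, then the neighbourhood of a bounded number of vertices near the ``weighted middle'' of this structure should already separate it into pieces of weight at most $1/2$. Concretely, take the vertex $v_i$ at which the weight of the heavy region changes side, together with boundedly many vertices of $P$ near it. The closed neighbourhood of this bounded set disconnects the parts of the heavy component lying before and after it. This contradicts the greedy choice, which guarantees that the heavy component survives the removal of the closed neighbourhood of any $b$ vertices.

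The main obstacle is the second step. One has to show that the induced linkage obtained from the failure of dominated balanced separators is strong enough to force the claw-subdivision or the wall line graph, rather than only some weaker structure; in particular, the local interaction between the attachments of the $v_i$ and the connectifier must be controlled. I would try first to bound the number of $v_i$ with long pairwise-disjoint attachment paths using $F$-freeness, and then to reduce to the case where all attachments are short. In that case the line-graph-of-wall exclusion is what rules out the grid-like outcome.
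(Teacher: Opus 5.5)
\textbf{The paper gives no proof of this lemma.} It imports it as a black box from \cite{ti5} and only derives the multiclaw version, \Cref{cor:sep-domin}, from it. So your outline has to stand on its own, and it does not: at each of your three steps the decisive claim is asserted rather than proved.

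\textbf{Step 1.} The greedy rule only yields nested heavy components $C_{\{v_1\}}\supseteq C_{\{v_1,v_2\}}\supseteq\cdots$ with $v_{k+1}\in C_{\{v_1,\ldots,v_k\}}$. (Pairwise distance at least $2$ is automatic.) The induced linkage you claim, avoiding the other neighbourhoods, does not follow. For instance, suppose the heavy part of $G$ is reached from $v_1$ only along a path $v_1a_1a_2a_3\cdots$. Then the rule permits $v_2=a_3$, and every path from $v_1$ into $C_{\{v_1,v_2\}}$ meets $N_G[v_2]$. More fundamentally, you use the hypothesis that no $N_G[X]$ with $|X|\le b$ is balanced only along a single nested sequence, which gives no global control of $G$.

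\textbf{Step 2.} The connectifier trichotomy cannot be reduced to the path case using the hypotheses.
\begin{itemize}
\item The subdivided-star outcome gives many leaves but no control over leg lengths. $F$-freeness only forces all but two legs to be shorter than the longest leg of $F$, and short legs are harmless. For example, unless $F=K_{1,3}$, a single vertex adjacent to many of the $v_i$ is allowed.
\item The line graph of a subdivided star is the line graph of a tree. It is therefore claw-free (hence $F$-free) and contains no line graph of a subdivided wall, whatever its leg lengths.
\end{itemize}
So neither star outcome is contradictory. Your remark that such pieces ``assemble'' into the line graph of a subdivided wall has no mechanism behind it: that exclusion is a global, treewidth-type obstruction, and a connectifier for one set of vertices carries no two-dimensional information.

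\textbf{Step 3.} This is essentially the statement to be proved. The weight lives in the heavy component, whose structure away from the path $P$ is uncontrolled. Nothing shows that the closed neighbourhood of boundedly many vertices near the ``weighted middle'' of $P$ splits it into light pieces.

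\textbf{What is missing.} You need a genuinely global structural input for graphs excluding a long subdivided claw, to which the wall-line-graph exclusion can then be applied. In the literature on long-claw-free graphs this role is played by extended strip decompositions; your sketch offers no substitute for it.
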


\begin{corollary}\label{cor:sep-domin}
      For every subdivided multiclaw $F$ and every wall $W$, there is a constant $b_{\ref{cor:sep-domin}}=b_{\ref{cor:sep-domin}}(F,W)$ such that every $F$-free graph with no induced subgraph isomorphic to the line graph of any subdivision of $W$ is $b_{\ref{cor:sep-domin}}$-separator-dominated.
\end{corollary}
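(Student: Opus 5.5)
The plan is to derive \Cref{cor:sep-domin} from \Cref{lem:sep-domin} by induction on the number of components of $F$, via a ``peeling'' argument. Write $F=F_1\cup\cdots\cup F_k$, where each $F_i$ is a subdivision of an induced subgraph of $K_{1,3}$. Each $F_i$ is then an induced subgraph of some subdivision $F'$ of $K_{1,3}$ (take a long common subdivision). Hence $F$ is an induced subgraph of the disjoint union of $k$ copies of $F'$, and it suffices to treat the case $F=kF'$. Set $b_1=b_{\ref{lem:sep-domin}}(F',W)$ and aim for $b_k=b_{k-1}+|V(F')|$.

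For the inductive step, let $G$ be $kF'$-free with no line graph of a subdivision of $W$, and fix a normal function $\we$. If $G$ is $F'$-free, \Cref{lem:sep-domin} applies directly. Otherwise pick an induced copy $H$ of $F'$ in $G$, and let $G'=G\setminus N_G[V(H)]$. Then $G'$ is $(k-1)F'$-free: an induced $(k-1)F'$ in $G'$ is anticomplete to $H$, so together with $H$ it gives an induced $kF'$ in $G$. The line-graph condition is hereditary, so it holds for $G'$.

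If $\we(V(G'))=0$, then $N_G[V(H)]$ is already a $\we$-balanced separator (every component of its complement has weight $0$), so $X=V(H)$ works. Otherwise, renormalize $\we$ on $G'$ as $\we'=\we/\we(V(G'))$, exactly as in the proof of \Cref{lem:ta-vs-sep-domin}. By induction there is $X'\subseteq V(G')$ with $|X'|\le b_{k-1}$ such that $N_{G'}[X']$ is $\we'$-balanced in $G'$. Put $X=X'\cup V(H)$, so $|X|\le b_k$. Every component $C$ of $G\setminus N_G[X]$ lies inside $G'\setminus N_{G'}[X']$, since $N_G[X]\supseteq N_G[V(H)]\cup N_{G'}[X']$. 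Thus $C$ is contained in a component of $G'\setminus N_{G'}[X']$, and so $\we(V(C))\le \we(V(G'))\cdot\tfrac12\le\tfrac12$.

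I don't expect a real obstacle. The only points needing care are the reduction from general multiclaw components to copies of a single subdivided claw, and the zero-weight edge case in the renormalization, both handled above. Note that $b$-separator-domination, as defined, implicitly requires $|X|\le b$, which is the bound we track throughout.
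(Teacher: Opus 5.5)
Your proof is correct and is essentially the paper's argument. The paper takes a maximal family of pairwise anticomplete induced copies of the $(r-1)$-subdivision of $K_{1,3}$ (at most $r-1$ of them, where $r=|V(F)|$), deletes the closed neighbourhood of their union, and applies \Cref{lem:sep-domin} to the remainder, which is your inductive peeling done in a single step; your explicit handling of the case $\we(V(G'))=0$ is a small point that the paper's renormalization passes over silently.
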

\begin{proof}
    Let $r=|V(F)|$. Let $F^+$ be the $(r-1)$-subdivision of $K_{1,3}$. Let
    $$b_{\ref{cor:sep-domin}}=b_{\ref{cor:sep-domin}}(F,W)=3r^2+b_{\ref{lem:sep-domin}}(F^+,W).$$

Let $\we: V(G)\to [0,1]$ be a normal function. Our goal is to show that there exists $S\subseteq V(G)$ with $|S|\leq b_{\ref{cor:sep-domin}}$ such that $N_G[S]$ is a $\we$-balanced separator in $G$. Let $\mca{F}$ be a maximal set of pairwise anticomplete induced subgraphs of $G$, each isomorphic to $F^+$. Since $G$ is $F$-free, it follows that $|\mca{F}|\leq r-1$. Let $U=\bigcup_{H\in \mca{F}}V(H)$. Then $|U|\leq (r-1)(3r+1)<3r^2$, and $G_0=G\setminus N_G[U]$ is $F^+$-free since $\mca{F}$ is maximal.

    Let $\we_0:V(G_0)\to [0,1]$ be the normal function with the rule $\we_0(v)=\we(v)/\we(V(G_0))$. Since $G_0$ is $F^+$-free with no induced subgraph isomorphic to the line graph of any subdivision of $W$, it follows from \Cref{lem:sep-domin} that $G_0$ is $b_{\ref{lem:sep-domin}}(F^+,W)$-separator-dominated; that is, there exists $S_0\subseteq V(G_0)$ with $|S_0|\leq b_{\ref{lem:sep-domin}}(F^+,W)$ such that $\we_0(V(C))\leq 1/2$ for every component $C$ of $G_0\setminus N_{G_0}[S_0]$. Since $G\setminus (N_{G}[U]\cup N_{G}[S_0])=G_0\setminus N_{G_0}[S_0]$, it follows that $\we(V(C))=\we_0(V(C))\we(V(G_0))\leq \we_0(V(C))\leq 1/2$ for every component $C$ of $G\setminus (N_{G}[U]\cup N_{G}[S_0])$; consequently, $N_{G}[U]\cup N_{G}[S_0]=N_{G}[U\cup S_0]$ is a $\we$-balanced separator in $G$. Moreover, we have $|U\cup S_0|\leq |U|+|S_0|\leq 3r^2+b_{\ref{lem:sep-domin}}(F^+,W)=b_{\ref{cor:sep-domin}}(F,W)$. This completes the proof of \Cref{cor:sep-domin}.
\end{proof}

Finally, we are ready to prove \Cref{thm:ta-vs-sep}, which we restate:

\tavssep*
\begin{proof}
    Let
$b_{\ref{thm:ta-vs-sep}}=b_{\ref{cor:sep-domin}}(F,W)$. Let $G$ be an $F$-free graph with no induced subgraph isomorphic to the line graph of any subdivision of $W$. By \Cref{cor:sep-domin}, every induced subgraph of $G$ is $b_{\ref{thm:ta-vs-sep}}$-separator-dominated. Hence, since $G$ is $(b_{\ref{thm:ta-vs-sep}},c)$-loosely $d$-$\alpha$-separable, by \Cref{lem:ta-vs-sep-domin}, we have $\ta(G)\leq 10c+5d$, as desired.
\end{proof}

\bibliographystyle{abbrv}
\bibliography{ref}  

\end{document}